\documentclass[12pt,leqno]{article}
\usepackage[utf8]{inputenc}
\usepackage{mathrsfs}
\usepackage{indentfirst,amsmath,amsthm,amsfonts,amsthm,amssymb}

\textwidth 17.5cm 
\textheight 25cm 
\voffset -3cm 
\hoffset -1,8cm


\def\la{\lambda}

\newcommand\nlp[2]{\|#2\|_{L^{#1}(\Omega)}}
\newcommand\nlpr[2]{\|#2\|_{L^{#1}(\R^2)}}
\newcommand\nlpla[2]{\|#2\|_{L^{#1}(\Omega_\la)}}
\newcommand\nlpd[3]{\|#2\|_{L^{#1}(#3)}}
\newcommand\nlpdbig[3]{\Bigl\|#2\Bigr\|_{L^{#1}(#3)}}

\def\tu{\widetilde{u}}
\def\ep{\varepsilon}
\def\al{\alpha}
\def\om{\omega}

\def\Om{\Omega}
\def\Oml{\Omega_\la}
\def\HO{H_\Omega}

\def\H{H_\Om}
\def\HL{H_{\Oml}}
\def\hl{H_\lambda}
\def\th{\widetilde H}
\renewcommand\th{H_1}
\def\dt{\frac{\text{d}}{\text{d}t}}
\def\wu{\widetilde{u}}
\def\L2s{L^2_\sigma (\Omega)}
\def\Lps{L^p_\sigma (\Omega)}
\def\Lqs{L^q_\sigma (\Omega)}
\def\LpO{{L^p(\Omega)}}

\newtheorem{theorem}{Theorem}
\newtheorem{lemma}[theorem]{Lemma}
\newtheorem{proposition}[theorem]{Proposition}
\newtheorem{corollary}[theorem]{Corollary}

\numberwithin{equation}{section} 

\theoremstyle{remark}
\newtheorem{remark}[theorem]{Remark}

\newcommand{\real}{\mathbb{R}}

\newcommand{\be}{\begin{equation}}
\newcommand{\ee}{\end{equation}}
\renewcommand{\leq}{\leqslant}
\renewcommand{\geq}{\geqslant}

\def\R{\mathbb{R}}
\def\ppom{\mathbb{P}_{\Omega}}
\def\pp{\mathbb{P}}
\def\ppomla{\mathbb{P}_{\Omega_\la}}

\DeclareMathOperator\curl{curl}
\DeclareMathOperator\dive{div}
\DeclareMathOperator\supp{supp}

\pagestyle{plain}

\title{\bf Self-similar asymptotics of solutions \\ to the Navier-Stokes system \\ in two dimensional exterior domain}

\author{Dragoş Iftimie, Grzegorz Karch \& Christophe  Lacave}
\def\adrese{
\begin{description}
\item[D. Iftimie:] Université de Lyon, CNRS, Université Lyon 1, Institut Camille Jordan, 43 bd. du 11 novembre, Villeurbanne Cedex F-69622, France.\\
Email: \texttt{iftimie@math.univ-lyon1.fr}\\
Web page: \texttt{http://math.univ-lyon1.fr/\~{}iftimie} 
\item[G. Karch:] Instytut Matematyczny, Uniwersytet Wroc{\l}awski,
 pl. Grunwaldzki 2/4, 50-384 Wroc{\l}aw, Poland.\\
Email: \texttt{grzegorz.karch@math.uni.wroc.pl}\\
Web page: \texttt{http://www.math.uni.wroc.pl/\~{}karch}
\item[C. Lacave:] Institut Mathématiques de Jussieu, Université Paris Diderot (Paris 7), 175 rue du Chevaleret, 75013 Paris, France.\\
Email: \texttt{lacave@math.jussieu.fr}\\
Web page: \texttt{http://people.math.jussieu.fr/\~{}lacave/} 
\end{description}
}
\begin{document} 
\maketitle
\begin{abstract}
We consider the 2D incompressible Navier-Stokes equations with Dirichlet boundary condition in the exterior of one obstacle. Assuming that the circulation at infinity of the velocity is sufficiently small, we prove that the large time behavior of the corresponding solution to the initial-boundary value problem is described by the Lamb-Oseen vortex. The later is the well-known explicit self-similar solution to the Navier-Stokes system in the whole space $\R^2$.
\end{abstract}

\section{Introduction}

It is well-known that the large time behavior of solutions of the initial-value problem for 
the Navier-Stokes equations considered either in the whole space $\R^n$, $n\geq 2$, or in an exterior domain
depends on integrability properties of initial conditions.
 In the finite energy case, that is when the velocity is square integrable, a solution tends to zero in $L^2(\R^n)$ as 
 time goes to infinity, see {\it e.g.}  \cite{MR1158939,MR775190, wiegner} and references therein.  
In this case, nonlinear effects are negligible for large values of time and asymptotics of solutions is determined by the 
corresponding Stokes semigroup.

On the other hand, 
when an initial velocity is not square integrable, a solution of the initial value problem for the
Navier-Stokes   in  $\R^n$ with $n\geq 2$ 
 is constructed in a so-called  scaling invariant space ({\it e.g.} in a homogeneous Besov space or
in a weak $L^n$-space) under suitable smallness assumption on initial conditions, see the
review article \cite{cannone} and the book \cite{MR1938147}. Here, the large time behavior of solutions is 
described by self-similar solutions to the Navier-Stokes system.

In this work, we contribute to the theory on the asymptotic behavior of solutions  of the Navier-Stokes system 
in a two dimensional exterior domain.
First, however, we recall that  the Navier-Stokes system in the whole space $\R^2$ has an explicit self-similar  solution called 
the Lamb-Oseen vortex 
\begin{equation}\label{Oseen}
\Theta(t,x)= \frac{x^\perp}{2\pi|x|^2}\Bigl(1-e^{-\frac{|x|^2}{4t}}\Bigr),\qquad \text{with} \quad  x^\perp =(x_2,-x_1),
\end{equation}
which appears in the large time expansions of other infinite energy solutions of this system. 
Let us explain this result.

For every  initial  vorticity $\omega_0 \in L^1(\R^2)$, one obtains the corresponding  divergence-free initial velocity field $u_0$ via the Biot-Savart law.
It is well-known that constructed-in-this-way  initial condition  belongs to the scaling invariant space $L^{2,\infty}(\R^2)$ 
(the weak $L^2$-space)
and the Navier-Stokes equations have a unique
global-in-time solution corresponding to such an initial datum, see \cite{MR1017289}.
Moreover, the large time behavior of solutions to the initial value problem for the 2D Navier-Stokes equations 
with  an initial vorticity from $L^1(\R^2)$  is given by the  multiple of the Lamb-Oseen vortex  $\alpha\Theta$,
 with  the {\it circulation at infinity}
$\al\equiv \int_{\R^2}\omega_0(x)\,dx$. 
This result was proved in \cite{GK88} if $\om_0$ is small in $L^1$, in \cite{Ca94}  in the case of small circulation, and in \cite{GW05} in  the general case.
In fact, due to the regularizing effect of the Navier-Stokes equations, as far as large time behavior is concerned,
an initial vorticity can be an arbitrary bounded Radon measure in $\R^2$, see  \cite{GG05}.

The aim of this paper is to show an analogous result on the large time behavior of solutions of the 2D Navier-Stokes equations in an exterior domain 
with the Dirichlet boundary condition, when the initial velocity is not square integrable. 
Here, however, due to the fact that a vorticity does not verify any reasonable boundary conditions, we cannot use the vorticity equation.
Hence, we formulate our hypothesis and results in terms of velocity rather than of vorticity.  
To see which hypothesis should be imposed  on an initial  velocity, 
we recall that,
for every bounded compactly supported vorticity,  one can construct  the corresponding velocity field in an exterior domain,
which
behaves when $|x|\to\infty$ as the vector field
 $x^\perp/|x|^2$, see \cite[Sec.~2.2]{MR1974460} and \cite[Sec.~3]{MR2244381} for more details. 
For this reason,  we assume in this work that our initial velocity is a small multiple of the particular vector  field  $x^\perp/|x|^2$ plus a large $L^2$ part.

Let us now be more precise. Assume that   $\Omega\subset \R^2$ 
is an exterior domain,
whose complement is a bounded, open, connected and  simply connected 
set, with a smooth boundary $\Gamma$.  Moreover, 
without loss of generality, we can assume that $B(0,1)\subset \R^2\setminus\Om$.
We consider the incompressible Navier-Stokes equations in $\Omega$ with the Dirichlet boundary condition
\begin{align}
\label{eq}&\partial_t u -\Delta u +u\cdot \nabla u +\nabla p =0,  
\qquad \dive u=0& &\text{for}\quad t>0, \quad x\in \Omega, \\
\label{boundary}&u(t,x)=0  & &\text{for}\quad t>0, \quad x\in \Gamma,\\
\label{ini}&u(0,x)=u_0(x) & & \text{for} \quad x\in \Omega.
\end{align}
Above, $u_0$ must be divergence free and tangent to the boundary. In the following, we assume that the initial condition is of the following particular form
\begin{equation}\label{decompu0}
u_0=\tu_0+\al\H  
\end{equation}
where $\wu_0\in \L2s$ is an arbitrary square integrable, divergence free, and tangent to the boundary vector field, and  
$\HO$ the unique harmonic vector field in $\Omega$ (\textit{i.e.} the unique vector field on $\Omega$ which is divergence free, curl free, vanishing at infinity, tangent to the boundary, and with circulation  equal to 1 on the boundary $\Gamma$). 
It was proved in \cite[Sec.~2.3]{MR1974460} that such a harmonic vector field $\H$ exists and behaves at infinity like $x^\perp/(2\pi|x|^2)$.  
Moreover, 
it follows directly from 
 \cite[Lemma 6 with $\ep=1$]{MR2244381} that every velocity field with a compactly supported bounded vorticity can be written 
under the form \eqref{decompu0}. 
Notice, however, that in an exterior domain, the circulation at infinity $\al$ is not the integral of the vorticity
as it is  in the full plane case. 
Namely, here, one has to subtract the circulation of the velocity on the boundary, so
the integral of the vorticity is in fact the total circulation of the velocity, see  \cite[Sec.~3.1]{MR1974460} for more details.

If the circulation at infinity  is sufficiently small, we are able to prove a counterpart of the result 
from \cite{GK88,Ca94, GW05}
on the large time
behavior of the Navier-Stokes in the whole plane. The following theorem contains the main result of this work.

\begin{theorem}\label{mainthm}
For every  $\wu_0\in\L2s$
there exists a constant $\al_0=\al_0(\tu_0,\Omega)>0$ such that 
for all  $|\al|\leq \al_0$
the solution of problem \eqref{eq}-\eqref{decompu0}
satisfies
\begin{equation} \label{asymp:p}
\lim_{t\to\infty}t^{\frac12-\frac1p}\|u(t)-\al\Theta (t)\|_\LpO=0
\end{equation}
for each $p\in (2,\infty)$.
\end{theorem}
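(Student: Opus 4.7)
My plan is to rewrite the conclusion as a stability statement for a family of rescaled solutions. For $\la\geq 1$ set $u^\la(t,x)=\la u(\la^2 t,\la x)$ on $\Oml=\la^{-1}\Om$. The map $u\mapsto u^\la$ preserves the Navier-Stokes equations with Dirichlet boundary on $\Oml$, and a direct computation shows that $\al\Theta$ is a fixed point of this rescaling. Since
\[
\nlpla{p}{v^\la(1,\cdot)}=\la^{1-2/p}\nlp{p}{v(\la^2,\cdot)},
\]
the choice $\la=\sqrt{t}$ converts \eqref{asymp:p} into the statement $\nlpla{p}{u^\la(1,\cdot)-\al\Theta(1,\cdot)}\to 0$ as $\la\to\infty$. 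The rescaled initial datum reads $u_0^\la=\la\tu_0(\la\cdot)+\al H_{\Oml}$ (using the easily checked identity $\la H_\Om(\la\cdot)=H_{\Oml}$), so $\tu^\la:=u^\la-\al H_{\Oml}$ starts in a ball of $L^2_\sigma(\Oml)$ whose radius equals $\nlp{2}{\tu_0}$, independently of $\la$.

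\textbf{Uniform energy estimate.} The analytic heart of the argument is a uniform-in-$\la$ energy bound for $\tu^\la$. Because $H_{\Oml}$ is stationary, harmonic, divergence free and tangent to $\partial\Oml$, subtracting the trivial equation for $\al H_{\Oml}$ from that for $u^\la$ and testing against $\tu^\la$ leaves, after the standard integrations by parts,
\[
\frac12\frac{d}{dt}\nlpla{2}{\tu^\la}^2+\nlpla{2}{\nabla\tu^\la}^2=-\al\int_{\Oml}(\tu^\la\cdot\nabla H_{\Oml})\cdot\tu^\la\,dx.
\]
Since $H_{\Oml}$ behaves at infinity like $x^\perp/(2\pi|x|^2)$, one has $|\nabla H_{\Oml}(x)|\lesssim|x|^{-2}$; combining this with the Hardy inequality $\int_{\Oml}|\tu^\la|^2/|x|^2\,dx\lesssim \nlpla{2}{\nabla\tu^\la}^2$ (valid because $\tu^\la$ vanishes on $\partial\Oml$ and the obstacle contains a neighborhood of $0$), the right-hand side is bounded by $C|\al|\nlpla{2}{\nabla\tu^\la}^2$. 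For $|\al|\leq\al_0$ small enough (independent of $\la$, by the scale-invariance of $H_\Om$), this is absorbed by the dissipation, yielding uniform bounds on $\tu^\la$ in $L^\infty_t L^2_\sigma(\Oml)\cap L^2_t H^1_0(\Oml)$.

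\textbf{Passing to the limit and identification.} Aubin--Lions compactness on compacts of $\R^2\setminus\{0\}$ (using the equation to trade space regularity for time regularity) extracts a subsequence $u^{\la_k}\to\bar u$ strongly in $L^2\loc((0,\infty)\times(\R^2\setminus\{0\}))$. Since the obstacles shrink to $\{0\}$ and the uniform $H^1$ bound rules out any concentration of energy at the origin, $\bar u$ is a distributional Navier-Stokes solution on all of $\R^2$ (the origin is a removable singularity in this regularity class). For the initial datum, $\la\tu_0(\la\cdot)\rightharpoonup 0$ in $L^2_\loc(\R^2\setminus\{0\})$ (its $L^2$-mass concentrates at $0$), while $\al H_{\Oml}(x)\to \al x^\perp/(2\pi|x|^2)$ pointwise, which is exactly the initial velocity of $\al\Theta$. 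Invoking the uniqueness theorem for 2D Navier-Stokes at small circulation \cite{Ca94}, we conclude $\bar u=\al\Theta$; uniqueness of the limit upgrades the subsequential convergence to convergence of the whole family.

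\textbf{Upgrading to $L^p$ and main obstacle.} To pass from local $L^2$-convergence at $t=1$ to the global $L^p(\Oml)$-convergence in \eqref{asymp:p}, I would combine parabolic smoothing (giving uniform $L^q(\Oml)$ bounds for any $q\in(p,\infty)$ once $t\geq\tfrac12$) with decay estimates at infinity for both $\tu^\la$ and $\al H_{\Oml}-\al\Theta(1,\cdot)$ (reflecting the matching of leading-order behavior of the velocities at infinity), and then interpolate. The main obstacle I foresee is the uniform energy estimate of the second step: $H_{\Oml}\notin L^2$, the coupling term between $\tu^\la$ and $H_{\Oml}$ carries no natural sign, and it is only the conjunction of the smallness of $\al$ with the Hardy inequality that closes the bound, which is precisely why the theorem requires a smallness assumption on the circulation.
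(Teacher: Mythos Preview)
Your rescaling strategy is natural and parallels Carpio's approach in the whole plane, but the argument breaks at the uniform energy estimate. The Hardy inequality you invoke,
\[
\int_{\Oml}\frac{|\tu^\la|^2}{|x|^2}\,dx\leq C\int_{\Oml}|\nabla\tu^\la|^2\,dx,
\]
is \emph{false} in dimension two, even for functions in $H^1_0$ of an exterior domain. On $\{|x|>1\}$ the radial profiles $\phi_R(r)=\log r$ for $1\leq r\leq R$, cut off linearly on $[R,2R]$, give $\int_1^\infty\phi_R^2\,r^{-1}\,dr\sim(\log R)^3$ while $\int_1^\infty(\phi_R')^2\,r\,dr\sim(\log R)^2$, so no finite constant works; the boundary condition at $|x|=1$ does not save you. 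Since the only scale-invariant norm of $H_{\Oml}$ is the $L^{2,\infty}$ norm, rewriting the coupling as $-\al\int H_{\Oml}\cdot(\tu^\la\cdot\nabla\tu^\la)$ leads back to the same weighted integral, and the term cannot be absorbed into the dissipation uniformly in $\la$. Without this bound the Aubin--Lions compactness and the identification step both collapse. Two further difficulties would persist even if you repaired the energy estimate: the $L^2$ mass $\|\tu_0\|_{L^2}^2$ of the rescaled initial datum concentrates at the origin, and you must argue that it does not contribute to the limiting solution (weak $L^2$-convergence of the data to zero does not by itself rule out a nontrivial initial layer); and the passage from local convergence to global $L^p(\Oml)$-convergence at $t=1$ requires uniform decay at spatial infinity, which is not a consequence of parabolic smoothing alone.

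The paper avoids all of this by separating the linear and nonlinear parts. The rescaling-and-compactness argument is carried out only for the \emph{Stokes} evolution $S(t)H_\Om$, where no coupling term appears and the needed uniform $H^1$ bounds follow from the scale-invariant $L^q$--$L^p$ semigroup estimates after splitting $H_{\Oml}$ into a piece bounded in $L^q(\Oml)$ for some $q<2$ and an $L^p$-small tail; uniform smallness at infinity is established by a separate vorticity calculation. The nonlinear problem is then treated perturbatively: one first shifts to a large time $T_0$ so that $\|u(T_0)\|_{L^{2,\infty}(\Om)}$ is small (using only the $L^2$-decay of finite-energy solutions and a short-time Gronwall lemma, with no Hardy inequality), and then a Duhamel argument, exploiting the identity $\ppom\dive(\Theta\otimes\Theta)=0$, transfers the linear asymptotics to the nonlinear solution. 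This decoupling is what makes the two-dimensional case tractable.
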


In other words, Theorem \ref{mainthm} says that the large time behavior of solutions to the Navier-Stokes system in an exterior domain, supplemented with the Dirichlet boundary condition and particular initial condition \eqref{decompu0} is described by the explicit self-similar solution  \eqref{Oseen}  of the Navier-Stokes system.

\begin{remark}
The global-in-time well-posedness for problem \eqref{eq}-\eqref{decompu0} was established by
Kozono and Yamazaki \cite[Thm.4]{KY95}. The existence part of that
result requires an initial velocity $u_0$ to satisfy a smallness condition of the form
$\limsup\limits_{R\to\infty}R |\{x\in \Omega\;:\; |u_0(x)|>R\}|^{1/2}\ll 1$.
This condition is satisfied for every $\wu_0\in \L2s$.
Since $\H$ 
 is bounded, the $\limsup$ above is always zero in this case.
\end{remark}

We apply the following strategy to prove Theorem \ref{mainthm}. In the next section, we prove
the limit relation \eqref{asymp:p} for
the linear evolution, that is when the nonlinear term $u\cdot\nabla u$ is skipped in equation \eqref{eq}. 
This is achieved by combining  results in \cite{MR2244381} with a rescaling technique used by  Carpio in \cite{Ca94}. 
Next,  in Section \ref{finalproof}, we show  that 
 we can assume, without loss of generality,
 that $u_0$ is small in the norm of the space $L^{2,\infty}(\Omega)$,
by replacing the initial condition in \eqref{ini}
with $u(t_0,x)$ with sufficiently large $t_0$ and  choosing sufficiently small $|\alpha|$ (see Lemma \ref{l4} below).
Finally, using the integral representation of solutions to problem \eqref{eq}-\eqref{ini}, we apply
  a stabilization argument inspired from \cite{BCK-BCP,MR2034160} to show that, for small data in 
$L^{2,\infty}_\sigma(\Omega)$, 
the asymptotic stability at the level of the Stokes equation implies the asymptotic stability at the level of the Navier-Stokes equations.

{\bf Notation.} In the following,  the space  $L^p_\sigma(\Omega)$ is the closure of the set of smooth, divergence-free, and compactly supported  
vector fields  $C^{\infty}_c(\Om)$
with respect to the usual $L^p$-norm.  We denote by $\ppom$ the Leray projection, \textit{i.e.} the $L^2$ orthogonal projection onto  $L^2_\sigma(\Omega)$,
which can be extended to a bounded operator on $L^p(\Omega)$ for every $p\in(1,\infty)$. Thus, the space  $L^p_\sigma(\Omega)$ 
is the image of   $L^p(\Omega)$ by $\ppom$. In a similar way, for every $p\in (1,\infty)$,
we define $L^{p,\infty}_\sigma(\Omega)=\ppom (L^{p,\infty}(\Omega))$, where $L^{p,\infty}(\Omega)$ is the Marcinkiewicz weak
$L^p$-space. Hence $u\in L^{p,\infty}_\sigma(\Omega)$ if $u\in L^{p,\infty}(\Omega)\times L^{p,\infty}(\Omega)$, $\dive u =0$ in $\Om$ and $u \cdot n=0$
on $\Gamma$, where $n$ is the normal vector to the boundary $\Gamma$.
The ball $B(0,R)\subset \R^2$ is centered at zero and of radius $R>0$.
By the letter $E$, we denote the extension operator of functions defined on $\Om$ to $\R^2$ with zero values outside the domain of definition.


 \section{Asymptotics of solutions to the linear evolution} \label{sec:linear}


It is well-known that the  Stokes operator 
associated with the following linear boundary value problem
\begin{align}
\label{eqS}&\partial_t v -\Delta v  +\nabla p =0, \qquad \dive v=0  & &\text{for}\quad t>0, \quad x\in \Om, \\
\label{boundaryS}&v(t,x)=0  & &\text{for}\quad t>0, \quad x\in \Gamma,\\
\label{iniS}&v(0,x)=v_0(x) & & \text{for} \quad x\in \Om,
\end{align}
 where $v_0$ is divergence free and tangent to the boundary, generates an analytic semigroup $S(t)$   on $\Lps$, for each $1<p<\infty$, see \cite{MR635201}. 
Moreover, this semigroup 
satisfies the  following decay $L^p$ estimates.

\begin{proposition} \label{ellpest}
Assume that  $1 < q <\infty$.

Let  $q \leq p\leq\infty$. There exists $K_1 = K_1(\Om,p,q)>0$ such that for every $v_0 \in \Lqs$
\begin{equation}\label{S1}
\|S(t)v_0\|_{L^p(\Omega)} \leq K_1 t^{\frac{1}{p} - \frac{1}{q}}\|v_0\|_{L^q(\Omega)}
\qquad \text{for all}\quad  t>0.
\end{equation} 
If,  in addition, we assume that $q<p\leq\infty$, then   for every $v_0\in L^{q,\infty}_\sigma(\Omega)$ we also have 
\begin{equation}\label{S1bis}
 \|S(t)v_0\|_{L^p(\Omega)} \leq K_1 t^{\frac{1}{p} - \frac{1}{q}}\|v_0\|_{L^{q,\infty}(\Omega)}
\qquad \text{for all}\quad  t>0.
\end{equation}

There exists $K_2 = K_2(\Om,q)>0$ such that 
for every $v_0\in L^{q,\infty}_\sigma(\Omega)$ we have 
the inequality
\begin{equation}\label{S1bis2}
 \|S(t)v_0\|_{L^{q,\infty}(\Omega)} \leq K_2 \|v_0\|_{L^{q,\infty}(\Omega)}
\qquad \text{for all}\quad  t>0.
\end{equation}

Let   $q \leq p \leq 2$. There exists $K_3 = K_3(\Om,p,q)>0$ such that 
\begin{equation}\label{S2}
 \|\nabla S(t)v_0\|_{L^p(\Omega)} \leq K_3 t^{-\frac{1}{2} + \frac{1}{p} - \frac{1}{q}}\|v_0\|_{L^q(\Omega)}
\qquad \text{for all}\quad  t>0.
\end{equation}

 Assume $q \geq 2$ and let $q \leq p < \infty$. Then there exists $K_4 = K_4(\Om,p,q)>0$
such that  for every matrix $F \in L^q(\Omega;M_{2\times2}(\real))$ 
\begin{equation}\label{S3}
 \| S(t)\ppom \dive F\|_{L^p(\Omega)} \leq K_4 t^{-\frac{1}{2} + \frac{1}{p} - \frac{1}{q}}
\|F\|_{L^q(\Omega)}
\qquad \text{for all}\quad  t>0,
\end{equation}
with the divergence $\dive$ computed  along rows of the matrix $F$.
\end{proposition}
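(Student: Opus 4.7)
The plan is to take the strong $L^p$--$L^q$ decay \eqref{S1} as the core estimate and deduce the remaining four bounds from it by soft arguments. Estimate \eqref{S1} itself is the classical $L^p$--$L^q$ decay of the Stokes semigroup in a two dimensional exterior domain; it combines the analyticity of $S(t)$ on $L^q_\sigma(\Omega)$ established in \cite{MR635201} with sharp resolvent estimates, and is available in the literature for the 2D exterior setting. This is where the genuine technical difficulty lies, because in two space dimensions the Stokes paradox complicates the low-frequency analysis of the resolvent; I would quote \eqref{S1} rather than reprove it.

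The weak-type bounds \eqref{S1bis} and \eqref{S1bis2} then follow from real interpolation. For \eqref{S1bis}, pick $q_1, q_2$ with $1<q_1<q<q_2\leq p$ (possible because $q<p$); \eqref{S1} with these two exponents shows that $S(t)$ maps $L^{q_i}_\sigma(\Omega)$ into $L^p(\Omega)$ with operator norm $K_1 t^{1/p-1/q_i}$. Writing $L^{q,\infty}$ as the real interpolation space $(L^{q_1},L^{q_2})_{\theta,\infty}$ for $1/q=(1-\theta)/q_1+\theta/q_2$, and noting that $(L^p,L^p)_{\theta,\infty}=L^p$ with equivalent norms, the fundamental lemma of real interpolation delivers \eqref{S1bis} with the interpolated time exponent $1/p - 1/q$. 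The same scheme applied in the diagonal case $p=q_i$ on both source and target sides, using only the uniform-in-$t$ boundedness provided by the $p=q$ case of \eqref{S1}, yields the weak-to-weak estimate \eqref{S1bis2}.

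For the gradient estimate \eqref{S2}, I would use analyticity together with a Riesz-type comparison. Writing $A_r$ for the Stokes operator on $L^r_\sigma(\Omega)$, analyticity yields $\|A_r^{1/2} S(t) v_0\|_{L^r}\leq C t^{-1/2}\|v_0\|_{L^r}$, and in the range $1<r\leq 2$ one has the bound $\|\nabla w\|_{L^r}\lesssim \|A_r^{1/2} w\|_{L^r}$; together these give the diagonal estimate $\|\nabla S(t) v_0\|_{L^r}\leq C t^{-1/2}\|v_0\|_{L^r}$. The off-diagonal case $q<p\leq 2$ is handled by the semigroup splitting $S(t)=S(t/2)S(t/2)$, applying \eqref{S1} to the first factor to move from $L^q$ to $L^p$ and then the diagonal gradient bound to the second; this produces the exponent $-\tfrac{1}{2}+\tfrac{1}{p}-\tfrac{1}{q}$.

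Finally, \eqref{S3} is obtained from \eqref{S2} by duality. For a test vector $v\in L^{p'}_\sigma(\Omega)$, using that $v$ is divergence free and tangent to $\Gamma$ (so that $\ppom$ is self-adjoint against $v$) and that $S^*(t)v$ vanishes on $\Gamma$, integration by parts gives
\begin{equation*}
\langle S(t)\ppom\dive F,\,v\rangle=-\langle F,\,\nabla S^*(t)v\rangle.
\end{equation*}
The adjoint semigroup $S^*(t)$ is again a Stokes semigroup, now on $L^{p'}_\sigma(\Omega)$, so \eqref{S2} applied to $S^*(t)v$ with exponents $p'\leq q'\leq 2$ (equivalent to $q\leq p$ and $q\geq 2$) gives $\|\nabla S^*(t)v\|_{L^{q'}}\leq C t^{-\frac{1}{2}+\frac{1}{p}-\frac{1}{q}}\|v\|_{L^{p'}}$, and taking the supremum over unit $v\in L^{p'}_\sigma(\Omega)$ produces \eqref{S3}. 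In summary, the only substantive obstacle is \eqref{S1}; the other four estimates reduce to it via interpolation, analyticity, and duality.
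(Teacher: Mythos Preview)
Your approach matches the paper's: it cites estimates \eqref{S1}--\eqref{S2} from the literature \cite{DS99a,DS99,KY95,MS97} and obtains \eqref{S3} from \eqref{S2} by exactly the duality argument you describe (the adjoint of $\nabla S(t)$ on $L^p_\sigma$ being $S(t)\ppom\dive$). You supply a bit more detail on how \eqref{S1bis}, \eqref{S1bis2}, and \eqref{S2} would follow from \eqref{S1} via real interpolation and analyticity, whereas the paper simply quotes all four from the references, but the substance is the same.
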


 Estimates \eqref{S1}--\eqref{S2} were proved
in \cite{DS99a,DS99,KY95,MS97} and   estimate \eqref{S3} follows from \eqref{S2} by a duality argument
because the adjoint of $\nabla S(t)$ on  $\Lps$ is $S(t)\mathbb{P} \dive$.

\begin{remark}\label{rem:scal}
Recall the following  scale invariance of the Stokes equations: 
the vector $(v(t,x),p(t,x))$ is a solution of system \eqref{eqS} on $\Omega$ if and only 
if for every $\lambda>0$ the vector 
$(\lambda v(\lambda^2 t ,\lambda x),  \lambda^2 p(\lambda^2 t ,\lambda x))$ is 
a solution of the same system on $ \Om/\la=\{x\in\R^2 \ ;\ \la x\in\Om\}$.
It follows from this scale invariance that the constants $K_1$,\dots,$K_4$ associated to $ \Om/\la$ are 
independent of $\lambda$. 
\end{remark}

The following corollary contains a minor improvement of the decay estimate \eqref{S1}.

\begin{corollary}\label{cor:dec}
Assume that  $1 < q <\infty$ and let $v_0 \in \Lqs$. Then for every  $p\in (q,\infty)$
\begin{equation*}
\lim_{t\to\infty} t^{ \frac{1}{q}-\frac{1}{p}} \|S(t)v_0\|_{L^p(\Omega)}=0. 
\end{equation*} 
\end{corollary}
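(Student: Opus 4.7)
My plan is to upgrade the bounded quantity $t^{1/q-1/p}\|S(t)v_0\|_{L^p(\Om)}$ provided by estimate \eqref{S1} to one that tends to zero, via a standard density argument combined with the same estimate applied at a lower integrability exponent.

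First I would fix $\varepsilon > 0$ and choose some auxiliary $r\in(1,q)$. By the very definition of $\Lqs$ as the $L^q$-closure of the smooth, divergence-free, compactly supported vector fields, there exists $\varphi\in C^\infty_c(\Om)$ with $\dive\varphi=0$ such that $\|v_0-\varphi\|_{L^q(\Om)}\leq\varepsilon$. Because $\varphi$ has compact support, one automatically has $\varphi\in L^r_\sigma(\Om)$ for this (and any) $r\in(1,\infty)$.

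Next I would split $S(t)v_0 = S(t)(v_0-\varphi)+S(t)\varphi$ and apply \eqref{S1} to each piece, to the first with the pair $(p,q)$ and to the second with $(p,r)$. This gives
\[
t^{\frac1q-\frac1p}\|S(t)v_0\|_{L^p(\Om)}\leq K_1\|v_0-\varphi\|_{L^q(\Om)}+K_1\,t^{\frac1q-\frac1r}\|\varphi\|_{L^r(\Om)}.
\]
Since $r<q$, the exponent $\frac1q-\frac1r$ is strictly negative, so the second term vanishes as $t\to\infty$. Consequently
\[
\limsup_{t\to\infty}t^{\frac1q-\frac1p}\|S(t)v_0\|_{L^p(\Om)}\leq K_1\varepsilon,
\]
and letting $\varepsilon\to 0$ concludes the proof.

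There is no serious obstacle in this argument; it is the classical ``density plus faster decay on a dense subspace'' trick. The only thing one has to verify is that test functions are dense in $\Lqs$ and automatically lie in $\Lps$ for smaller exponents, both of which are immediate from the definitions given in the paper.
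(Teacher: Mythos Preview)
Your proof is correct and is precisely the argument the paper has in mind: the paper's proof says only that the limit is ``clear'' for smooth compactly supported data and then invokes density together with estimate \eqref{S1}, and your write-up simply spells out these two steps (using that $\varphi\in L^r_\sigma(\Om)$ for some $r<q$ gives the faster decay, and that the $L^q$-approximation controls the remainder uniformly in $t$). There is nothing to add or correct.
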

\begin{proof}
This limit relation is clear when the initial datum is smooth and compactly supported.
To show it for all   $v_0 \in \Lps$, it suffices to use a standard density argument combined with estimate \eqref{S1}.
\end{proof}

Now, we consider the linear problem \eqref{eqS}-\eqref{iniS}
 with the initial datum $v_0=\HO$,  where $\HO$ the unique harmonic vector field in $\Omega$.
The main goal of this section is to show that the large time behavior of $S(t)\HO$ is described by the Lamb-Oseen vortex $\Theta$. More precisely, we will prove the following theorem.

\begin{theorem}\label{thm:oseen}
For every $p\in (2,\infty)$, we have
$
\lim\limits_{t\to\infty}t^{\frac12-\frac1p}\nlp p{S(t)\HO-\Theta(t)}=0.
$
\end{theorem}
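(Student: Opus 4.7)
The plan is to apply a parabolic rescaling in the spirit of Carpio \cite{Ca94} that converts the large-time asymptotic into a fixed-time convergence statement on a family of expanding domains. For $\la>0$, set $\Oml=\Om/\la$ and
\[
v_\la(s,y)=\la\,(S(\la^2 s)\HO)(\la y),\qquad y\in\Oml,\ s>0.
\]
By the scale invariance recalled in Remark \ref{rem:scal}, $v_\la$ solves the Stokes problem on $\Oml$ with initial condition $v_\la(0,y)=\la\HO(\la y)$. Using the self-similar identity $\Theta(t,x)=t^{-1/2}\Theta(1,x/\sqrt t)$ together with the change of variables $x=\sqrt t\,y$, a direct computation gives
\[
t^{\frac12-\frac1p}\nlp{p}{S(t)\HO-\Theta(t)}=\nlpd{p}{v_{\sqrt t}(1,\cdot)-\Theta(1,\cdot)}{\Omega_{\sqrt t}},
\]
so the theorem reduces to showing $\nlpla{p}{v_\la(1,\cdot)-\Theta(1,\cdot)}\to 0$ as $\la\to\infty$.

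Next I would feed in the description of $\HO$ at infinity from \cite{MR2244381}, namely $\HO(x)=x^\perp/(2\pi|x|^2)+O(|x|^{-2})$ as $|x|\to\infty$. This implies
\[
\la\HO(\la y)\ \xrightarrow[\la\to\infty]{}\ \frac{y^\perp}{2\pi|y|^2}
\]
locally uniformly on $\R^2\setminus\{0\}$, together with a uniform bound $\|\la\HO(\la\cdot)\|_{L^{2,\infty}(\Oml)}\leq C$ independent of $\la$. The limit is precisely the initial datum of the Lamb--Oseen vortex, whose evolution under the heat semigroup on the whole plane is $\Theta$.

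Then I would pass to the limit $\la\to\infty$ in the rescaled Stokes problem. The key point is Remark \ref{rem:scal}: the constants in Proposition \ref{ellpest} applied on $\Oml$ are independent of $\la$. Combined with the uniform weak-$L^2$ bound on the initial data, estimate \eqref{S1bis} yields a $\la$-uniform bound on $\nlpla{p}{v_\la(1,\cdot)}$ for every $p\in(2,\infty)$. Interior parabolic regularity on compact subsets of $\R^2\setminus\{0\}$ (which lie in $\Oml$ for $\la$ large enough) then provides local compactness, and a subsequence of $v_\la(1,\cdot)$, extended by zero to $\R^2$, will converge locally away from the origin to a limit $w$. Passing to the limit in the equations shows that $w$ is divergence free and solves the heat equation on $\R^2\setminus\{0\}$ with initial datum $y^\perp/(2\pi|y|^2)$; uniqueness for the Stokes equation on $\R^2$ with weak-$L^2$ initial data forces $w=\Theta(1,\cdot)$. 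Finally, local convergence would be upgraded to $L^p(\Oml)$ convergence by controlling the tail at infinity via \eqref{S1bis} and the region near the shrinking obstacle.

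The hard part will be precisely this last upgrade. Because $y^\perp/|y|^2$ fails to be in $L^p$ near $0$ for any $p\geq 2$, any attempt to use strong $L^p$ estimates directly on the rescaled initial data would blow up as $\la\to\infty$, so the weak-$L^p$ estimate \eqref{S1bis} is indispensable. Controlling simultaneously the vanishing obstacle, the singular limit initial datum, and the $L^p$ nature of the conclusion uniformly in $\la$ is exactly where the scale invariance of Proposition \ref{ellpest} must be combined with the sharp asymptotics of $\HO$ from \cite{MR2244381} to close the argument.
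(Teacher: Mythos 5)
Your overall strategy coincides with the paper's: parabolic rescaling to $\Oml=\Om/\la$ reduces the theorem to showing $\|v_\la(1)-\Theta(1)\|_{L^p(\Oml)}\to0$, the rescaled initial datum is the harmonic field $\HL=\la\HO(\la\cdot)$ of the rescaled domain with a $\la$-uniform weak-$L^2$ bound, and the weak limit is identified as $\Theta(1)$ by passing to the limit in the equation (the paper does this by invoking the cut-off analysis of \cite{MR2244381} near the shrinking obstacle). Up to that point your outline is sound, and the region near the origin that you worry about is in fact harmless: the scale-invariant estimate \eqref{S1bis} with $p=\infty$ gives $\|v_\la(1)\|_{L^\infty(\Oml)}\leq C$ uniformly in $\la$, so $\|v_\la(1)\|_{L^p(B(0,\delta))}\leq C\delta^{2/p}$ is uniformly small.

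The genuine gap is the step you yourself flag as ``the hard part'' and then do not carry out: upgrading weak/local convergence to convergence in $L^p(\R^2)$ requires uniform-in-$\la$ smallness of $\|v_\la(1)\|_{L^p(\{|y|>R\})}$ as $R\to\infty$, i.e.\ tightness of the family, and the estimate \eqref{S1bis} cannot deliver this. A uniform bound on the global $L^p$ norm says nothing about where the mass sits; mass could escape to spatial infinity as $\la\to\infty$, which is exactly consistent with weak but not strong $L^p$ convergence. The paper devotes Lemma \ref{lem2} to closing this: it splits $\HL=\ppomla(h_{R_0}\HL)+\ppomla[(1-h_{R_0})\HL]$, controls the first piece by the explicit $|\HO(x)|\leq C/|x|$ decay (giving an $L^p$ norm of order $R_0^{2/p-1}$ uniformly in $\la$), and for the second piece passes to the \emph{vorticity} $\om_\la=\curl v_\la$: the localized vorticity $Eh_R\om_\la$ solves a forced heat equation in the whole plane with \emph{zero} initial datum for $R>2R_0$ (since $\HL$ is curl-free and the Leray correction is a gradient), so Duhamel plus the gradient estimate \eqref{S2} and a div--curl inequality yield $\|h_Rv_\la(1)\|_{L^p(\R^2)}\leq CR^{2/p-1}$ uniformly in $\la$. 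Without some substitute for this vorticity argument (or another mechanism producing uniform spatial decay of $v_\la(1)$), your proof does not close; invoking \eqref{S1bis} for the tail is not a proof step but a restatement of the difficulty.
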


The reminder of this section is devoted to the proof of this theorem. 
Here, we use a scaling argument that was also applied  in \cite{Ca94} 
to study large time asymptotics for the Navier-Stokes equations. 
Hence, for every  $\la\geq1$, we  define 
$$
\Omega_\la\equiv \Om/\la=\{x\in\R^2 \ ;\ \la x\in\Om\}.
$$ 
The  vector field $\la\H(\la x)$ is divergence free, curl free, tangent to the boundary of $\Oml$, vanishes at infinity and 
has circulation equal to 1 on $\partial\Oml$. 
Thus, by  \cite[Prop.~2.1]{MR1974460}, this rescaled vector field  has to  
 be equal to the unique harmonic vector field on $\Oml$, namely, we have the identity 
\begin{equation}\label{HL}
\HL(x)= \la\H(\la x).
\end{equation}
Let us now denote by $S_\la(t)$ the Stokes semi-group on the domain $\Oml$ and let us define
\begin{equation}\label{HSL}
\hl(t,x)\equiv S_\la(t)\HL.  
\end{equation}
By the scaling invariance of equations \eqref{eqS}, by  \eqref{HL}, and by the uniqueness of solutions to the Stokes problem, 
we infer that
\begin{equation*}
\hl(t,x)=\la H_1(\la^2t,\la x),  
\end{equation*}
where we put $H_1(t,x)=S(t)\H$. Recalling, moreover,  the scaling property of the Lamb-Oseen vortex 
 $\la \Theta(\la^2t,\la x)=\Theta(t,x)$, we observe that the conclusion of Theorem \ref{thm:oseen} is equivalent to
\begin{equation*}
\lim_{\la\to\infty}\nlpla p{\hl(1)-\Theta(1)}=0 \qquad \text{for every} \quad p \in (2,\infty).
\end{equation*}
In the following, we
 denote by $E$ the extension operator to $\R^2$ with zero values outside the domain of definition. 
Since $\Theta(1)$ is a bounded function,  we immediately obtain that
$\lim\limits_{\lambda\to\infty} \|\Theta(1)\|_{L^p(\R^2\setminus \Om_\la)}=0$. Hence, 
in order to prove Theorem \ref{thm:oseen}, it suffices to show that
\begin{equation}\label{EHstrong}
E\hl (1,x)\stackrel{\la\to\infty}{\longrightarrow} \Theta(1,x) \quad \text{strongly in} \quad  L^p(\R^2) \qquad \text{for every} \quad p\in(2,\infty).
\end{equation}

First, we state a result on the weak convergence.

\begin{lemma}
Let $\hl(t,x)=\la H_1(\la^2t,\la x)$. Then
\begin{equation}\label{EHweak}
E\hl (1,x)\stackrel{\la\to\infty}{\longrightarrow} \Theta(1,x) \quad \text{weakly in} \quad   L^p(\R^2)\qquad \text{for every} \quad p\in(2,\infty).
\end{equation}
\end{lemma}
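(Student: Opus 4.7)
The strategy is weak compactness in $L^p(\R^2)$ followed by identification of the weak limit as $\Theta(1)$.

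First I establish uniform boundedness. By scaling, $\|\HL\|_{L^{2,\infty}(\Oml)} = \|\HO\|_{L^{2,\infty}(\Omega)}$, a finite quantity independent of $\lambda$. Applying \eqref{S1bis} on $\Oml$ with constant independent of $\lambda$ by Remark \ref{rem:scal} then yields
\[
\|EH_\lambda(t)\|_{L^p(\R^2)} = \|\hl(t)\|_{L^p(\Oml)} \leq C\, t^{\frac{1}{p}-\frac{1}{2}},\qquad t>0.
\]
In particular $\{EH_\lambda(1)\}$ is bounded in the reflexive space $L^p(\R^2)$, so a subsequence $\lambda_k \to \infty$ gives $EH_{\lambda_k}(1) \rightharpoonup v$ in $L^p(\R^2)$; it suffices to show $v=\Theta(1)$ for every such subsequential limit.

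Next I identify $v$ by passing to the limit in the Stokes equation. The known asymptotics $\HO(y) = \frac{y^\perp}{2\pi|y|^2} + O(|y|^{-3})$ as $|y|\to\infty$ (from \cite{MR1974460}) imply
\[
\HL(x) = \lambda \HO(\lambda x) = \frac{x^\perp}{2\pi|x|^2} + O(\lambda^{-2}|x|^{-3}),
\]
so $E\HL \to x^\perp/(2\pi|x|^2)$ locally uniformly on $\R^2 \setminus \{0\}$. For any divergence-free $\phi \in C_c^\infty([0,1]\times(\R^2\setminus\{0\}))$, the support lies in $[0,1]\times\Oml$ for $\lambda$ large, and testing the Stokes equation for $\hl$ against $\phi$ (the pressure vanishing by divergence-freeness) gives
\[
\int_{\R^2}EH_\lambda(1)\phi(1)\,dx - \int_{\R^2}E\HL\,\phi(0)\,dx = \int_0^1\!\!\int_{\R^2} EH_\lambda\,(\partial_t\phi + \Delta\phi)\,dx\,dt.
\]
Since $t^{1/p-1/2}$ is integrable on $[0,1]$ for $p>2$, $EH_\lambda$ is also bounded in $L^1((0,1);L^p(\R^2))$. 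Passing to a further subsequence for which $EH_\lambda \rightharpoonup \tilde v$ in that space with $\tilde v(1)=v$, and sending $\lambda_k \to \infty$ in each term (weak convergence plus locally uniform limit of $E\HL$ on $\supp\phi(0,\cdot)$), $\tilde v$ solves the heat equation in the distribution sense on $(0,1]\times(\R^2\setminus\{0\})$ with initial trace $x^\perp/(2\pi|x|^2)$.

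Finally I conclude by uniqueness. Since $\tilde v$ is divergence-free on all of $\R^2$ (weak limit of divergence-free fields), and the distributional $\curl E\HL$ is a vortex sheet on $\partial\Oml$ of total circulation $1$ that concentrates onto $\delta_0$ as $\lambda\to\infty$, the limiting Cauchy problem on $\R^2$ is the heat equation with initial vorticity $\delta_0$, whose unique velocity solution is $\Theta$. Hence $v = \tilde v(1) = \Theta(1)$. The main obstacle is this final identification: one must rule out that the limiting equation on $\R^2$ picks up an extra distribution supported at $\{0\}$, which is why the global $L^p(\R^2)$ control of $EH_\lambda(1)$, rather than mere $L^p\loc(\R^2 \setminus \{0\})$ bounds, is essential---it prevents Dirac-type concentration of $\tilde v$ at the origin.
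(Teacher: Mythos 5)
Your first step (the uniform $L^p(\R^2)$ bound via the scale invariance of the weak-$L^2$ norm and estimate \eqref{S1bis}, hence weak compactness) is exactly the paper's first step. But the identification of the weak limit is where the two arguments diverge, and your version has a genuine gap. The paper does not re-derive the limit: it invokes Proposition 18 and Theorem 22 of \cite{MR2244381}, where the convergence of $\pp_{\R^2}[\eta^\la E\hl]$ to $\Theta$ in $H^{-3}_{loc}$ is established by a careful analysis near the shrinking obstacle, and then removes the cut-off $\eta^\la$ by an $L^\infty$ bound on a set of measure $O(\la^{-2})$. That cited analysis is precisely the content you are trying to replace.

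The hole in your replacement is the final identification. Because your test functions $\phi$ are supported in $\R^2\setminus\{0\}$, what you actually obtain is that $\tilde v$ solves the heat equation on $(0,1]\times(\R^2\setminus\{0\})$; on all of $\R^2$ you only know $\partial_t\tilde v-\Delta\tilde v=\mu$ for some distribution $\mu$ supported on $\{x=0\}$ for positive times (physically: vorticity is generated on $\partial\Oml$, which collapses to the origin, throughout the time interval, not only at $t=0$). Your claimed mechanism for killing $\mu$ --- that the global $L^p(\R^2)$ bound on $E\hl(1)$ ``prevents Dirac-type concentration'' --- does not work: the solution of $\partial_t w-\Delta w=c(t)\,\nabla^\perp\delta_0$ (which is divergence free, so consistent with $\dive\tilde v=0$) with $w(0)=0$ satisfies $\|w(t)\|_{L^p}\leq\int_0^t|c(s)|(t-s)^{-\frac32+\frac1p}\d s$, hmm, and even for the zeroth-order source $c(t)\delta_0$ one gets $\|w(t)\|_{L^p}\lesssim\int_0^t|c(s)|(t-s)^{-1+\frac1p}\d s<\infty$ for $p<\infty$; so nonzero sources at the origin are perfectly compatible with membership in $L^1((0,1);L^p(\R^2))$ and even with a uniform bound on $\|\tilde v(1)\|_{L^p}$. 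Ruling out this defect requires quantitative control of the solution near $\partial\Oml$ uniformly in $\la$ (this is what the cut-off/energy arguments of \cite{MR2244381} provide), and without it your ``uniqueness of the limiting Cauchy problem'' step is an assertion, not a proof. A secondary, more minor point: the linear growth of the total circulation/vorticity flux through $\partial\Oml$ over $(0,1]$ is exactly the quantity $\mu$ encodes, so the statement that the initial vorticity concentrates to $\delta_0$ addresses only the $t=0$ trace and not the persistent boundary production.
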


\begin{proof}
Observe now that due to the identity  $\|\HL\|_{L^{2,\infty}_\sigma(\Omega_\la)}=\|\H\|_{L^{2,\infty}_\sigma(\Omega)}$ 
for every $\la\geq1$, 
the scaling invariant  estimate \eqref{S1bis} 
 implies that the family $\{E\hl(1)\}_{\lambda\geq1}$ is bounded in $L^p(\R^2)$
for every $p\in (2,\infty)$, hence weakly compact in these spaces.

From now on, we follow the reasoning from  \cite{MR2244381}, 
where the authors considered the Navier-Stokes equations in $\Oml$ with a more general initial velocity. In our case, the initial vorticity vanishes while in  \cite{MR2244381} the vorticity is smooth, independent of $\la$ and compactly supported in $\R^2\setminus\{0\}$. The difference between the Stokes and the Navier-Stokes equations is the bilinear term $u\cdot\nabla u$ which only complicates matters. Therefore, ignoring all additional difficulties caused  by the bilinear term, the results proved in \cite{MR2244381} go through to our case. Note that the smallness assumption required in \cite{MR2244381} is irrelevant in this work  since we deal with a linear equation.

Let us be more precise.
It was proved in  \cite{MR2244381} (see Proposition 18 and the end of the proof of Theorem 22) that $\pp_{\R^2}[\eta^\la E\hl]$ converges to the Lamb-Oseen vortex $\Theta$ when $\la\to\infty$, up to a subsequence, uniformly in time with values in $H^{-3}_{loc}(\R^2)$. The precise definition of the cut-off function $\eta^\la$ is not required here (the interested reader can find it in relation (4.1) of  \cite{MR2244381} with $\ep=1/\la$). We only need to know that $0\leq\eta^\la\leq1$, that $\eta^\la$ vanishes in the neighborhood of the boundary of $\Oml$ and  that $\eta^\la(x)\equiv1$ for all $|x|>C/\la$. 

In particular, we have that $\pp_{\R^2}[\eta^\la E\hl(1)]\to\Theta(1)$ in  $H^{-3}_{loc}(\R^2)$  when $\la\to\infty$, up to a subsequence. On the other hand, the sequence $\pp_{\R^2}[\eta^\la E\hl(1)]$ is bounded in $L^p(\R^2)$ since $\hl(1)$ is bounded in $L^p(\Oml)$. By uniqueness of limits, we infer that $\pp_{\R^2}[\eta^\la E\hl(1)]\to\Theta(1)$ weakly in $L^p(\R^2)$ as $\la\to\infty$.

Finally, we observe that
\begin{multline*}
\nlpr p{ \pp_{\R^2}[\eta^\la E\hl(1)]-E\hl(1)}
= \nlpr p{ \pp_{\R^2}[(\eta^\la-1) E\hl(1)]}\\
\leq C \nlpr p{(\eta^\la-1) E\hl(1)}
\leq C\nlp\infty{\hl(1)}\operatorname{mes}(B(0,C/\la))^{\frac1p}
\leq C\la^{-\frac2p}\stackrel{\la\to\infty}{\longrightarrow}0.
\end{multline*}
This completes the proof of the lemma.
\end{proof}

Consequently,  to prove the strong convergence \eqref{EHstrong}, in view of the weak convergence \eqref{EHweak}, 
it suffices to show that $\{E\hl(1)\}_{\lambda\geq1}$ is relatively compact in  $L^p(\R^2)$ for every $p\in(2,\infty)$. 
Here, we proceed in two steps;  we show 
that  the family $\{E\hl(1)\}_{\lambda\geq1}$   is:
\begin{itemize} 
\item[i)] relatively compact in $L^p_{loc}(\R^2)$ for every  $p\in(2,\infty)$ (Lemma \ref{lem:comp1}, below), 
\item[ii)] small in the  $L^p$-sense for large $|x|$, uniformly in $\la\geq1$ (Lemma \ref{lem2}). 
\end{itemize}

Then, the relative compactness of the family $\{E\hl(1)\}_{\lambda\geq1}$ in the space $L^p(\R^2)$ is a consequence of a standard diagonal argument.
Here, a set is called to be relatively compact in  $L^p_{loc}(\R^2)$ if it is relatively compact in  $L^p (B(0,R))$ for every $R>0$.

In the following two lemmas,  $R>1$ is a sufficiently large constant  
and  
\begin{equation}\label{hR}
h_R(x)=h(x/R), \qquad  \text{where}\quad h\in C^\infty(\R^2)
\end{equation}
is such that $h(x)=0$ for $|x|<1$ and $h(x)=1$ for $|x|>2$. 

\begin{lemma} \label{lem:comp1}
Let $\hl(t)$ be defined in \eqref{HSL}.
 The set  $\{E\hl(1)\}_{\lambda\geq1}$ is relatively compact in  $L^p_{loc}(\R^2)$ for every $p\in(2,\infty)$. 
\end{lemma}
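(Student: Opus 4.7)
The plan is to combine a uniform $L^\infty$ bound on $\hl(1)$ with a uniform interior Sobolev estimate away from the origin, and then conclude via Rellich--Kondrachov together with a tightness argument near the origin. As a first step, I apply estimate \eqref{S1bis} on $\Oml$ with $q=2$ and $p=\infty$: since the $L^{2,\infty}$-norm is scale-invariant in $\R^2$ we have $\|\HL\|_{L^{2,\infty}(\Oml)}=\|\HO\|_{L^{2,\infty}(\Om)}$, while by Remark \ref{rem:scal} the constant $K_1$ is itself $\la$-independent. This yields
\[
\|\hl(t)\|_{L^\infty(\Oml)}\leq C\,t^{-1/2}\|\HO\|_{L^{2,\infty}(\Om)}
\]
uniformly in $\la\geq 1$, so in particular $\{E\hl(1)\}_{\la\geq 1}$ is uniformly bounded in $L^\infty(\R^2)$.

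Next I establish uniform interior regularity away from the origin. Fix $R>0$ and $\varepsilon\in(0,R)$. For $\la>2/\varepsilon$, the annulus $A_\varepsilon=B(0,R)\setminus\overline{B(0,\varepsilon)}$ lies inside $\Oml$ at distance at least $\varepsilon/2$ from $\partial\Oml$, since $\R^2\setminus\Oml\subset B(0,1/\la)$. Combining the $L^\infty$-bound from the previous paragraph on the time slab $[1/2,1]$ with interior parabolic regularity for the Stokes system---most cleanly obtained in two dimensions by noting that the vorticity $\om_\la=\curl\hl$ solves the heat equation in $\Oml$ and then reconstructing $\hl$ via Biot--Savart---yields
\[
\|\nabla\hl(1)\|_{L^p(A_\varepsilon)}\leq C(R,\varepsilon)
\]
uniformly in $\la>2/\varepsilon$. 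The scale invariance of Remark \ref{rem:scal} is what ensures that no $\la$-dependence is hidden in the constant.

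To conclude, Rellich--Kondrachov applied to this Sobolev bound gives that $\{E\hl(1)|_{A_\varepsilon}\}_{\la>2/\varepsilon}$ is relatively compact in $L^p(A_\varepsilon)$; the finitely many $\la\leq 2/\varepsilon$ contribute only a compact set of smooth functions, so pose no obstruction. On the inner ball, the uniform $L^\infty$-bound gives $\|E\hl(1)\|_{L^p(B(0,\varepsilon))}\leq C\varepsilon^{2/p}$ uniformly in $\la$, which tends to $0$ as $\varepsilon\to 0^+$. A standard $\varepsilon$-net/diagonal argument converts these two facts into total boundedness, and hence relative compactness, of $\{E\hl(1)\}_{\la\geq 1}$ in $L^p(B(0,R))$. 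The step requiring the most care is the uniform-in-$\la$ interior Sobolev estimate: because $\Oml$ itself varies with $\la$, I need the local regularity constants for Stokes on $\Oml$ to depend only on the distance to $\partial\Oml$ and on a local $L^\infty$-bound---not on $\la$---which is precisely what the scale invariance of Remark \ref{rem:scal} provides.
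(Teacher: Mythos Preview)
Your approach is correct in outline and genuinely different from the paper's. The paper does not use interior parabolic regularity at all; instead it decomposes the initial datum as $\HL=\ppomla(h_R\HL)+\ppomla[(1-h_R)\HL]$ with a spatial cutoff $h_R$. The compactly supported piece $(1-h_R)\HL$ lies in $L^q(\Oml)$ for every $q\in(1,2)$ uniformly in $\la$ (since $|\HL(x)|\lesssim 1/|x|$), and then the \emph{global} gradient estimate \eqref{S2}---which is only available for $q\leq p\leq2$---yields a uniform $H^1(\R^2)$ bound on the extended Stokes evolution of this piece, hence $L^p_{loc}$-compactness by Sobolev embedding. The far-field piece $h_R\HL$ is shown to have small $L^p$-norm as $R\to\infty$, uniformly in $\la$. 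The paper explicitly notes that no scaling-invariant gradient estimate starting from $L^{2,\infty}$ is known, which is why it must first pass through $L^q$ with $q<2$; your route sidesteps this obstruction entirely by trading the global gradient estimate for local interior regularity (via the vorticity heat equation), at the price of a slightly more delicate localization argument.

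Two small corrections. First, ``the finitely many $\la\leq 2/\varepsilon$'' is a slip: $\la$ ranges over a continuum. What you actually need is that $\la\mapsto E\hl(1)=\la\,E[H_1(\la^2,\la\,\cdot)]$ is continuous from the compact interval $[1,2/\varepsilon]$ into $L^p(B(0,R))$, which follows from the smoothness of $t\mapsto S(t)\HO$ for $t>0$; then the image of a compact interval is compact. Second, your final appeal to Remark~\ref{rem:scal} for the interior estimate is misplaced: that remark concerns the global semigroup constants $K_1,\dots,K_4$, whereas interior parabolic regularity for Stokes is a purely local statement whose constants depend only on the distance to the parabolic boundary and on the local $L^\infty$-norm of the velocity---not on any global feature of $\Oml$. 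So the uniformity in $\la$ of the interior bound is automatic once you have the uniform $L^\infty$ control on $[1/2,1]\times\Oml$; Remark~\ref{rem:scal} is only needed (and correctly used) in your first step.
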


\begin{proof}
Here, as the usual practice, one could  show  $L^p$-estimates 
for $\nabla E\hl(1)$ which are uniform in $\lambda\geq1$. 
Unfortunately, we do not know any scaling invariant gradient estimate for solutions of the Stokes equation with
initial conditions from $L^{2,\infty}(\Omega)$. Thus, we have to  proceed in a different manner. 

Recall first that, by \cite[Prop.~2.1]{MR1974460}, 
 the vector field $\H$ is smooth, bounded, and 
 there is a constant $C>0$ such that  $|\H(x) | \leq C/|x|$ for all $x\in\Omega$ 
(recall that $\Omega \subset \R^2\setminus B(0,1)$). 
Since the rescaled harmonic vector field $\HL$ is divergence free and tangent to the boundary, we can write  the following decomposition
\begin{equation*}
\HL=\ppomla\HL=\ppomla(h_R\HL)+\ppomla[(1-h_R)\HL],
\end{equation*}
with the cut-off function $h_R$ defined in \eqref{hR}.

Obviously, the Leray projector $\ppomla$ is a bounded operator on  the space
$L^q(\Oml)$ for each  $1<q<\infty$, with norm independent of $\la$. 
Thus, for fixed  $q\in(1,2)$, using the identity \eqref{HL} we estimate
\begin{equation}\label{boundlq}
\begin{split}
\nlpla q{\ppomla[(1-h_R)\HL]}&\leq C \nlpla q{(1-h_R)\HL} \leq C\la \nlpd q{\H(\la \cdot)}{\Oml\cap B(0,2R)} \\
&=C\la^{1-\frac2q} \nlpd q{\H}{\Om\cap B(0,2R\la)}
\leq  C\la^{1-\frac2q} \nlpdbig q{\frac1{|\cdot|}}{1<|x|<2R\la}\\
&\leq C(q, \Omega) R^{\frac2q-1}.
\end{split}
\end{equation}
Therefore, the quantity
 $\ppomla[(1-h_R)\HL]$ is bounded in $L^q(\Oml)$ with $q\in (1,2)$, uniformly with respect to $\la$. Now, we deduce from the 
scaling invariant decay 
estimates \eqref{S1} and  \eqref{S2} that $\big\{S_\la(1)\ppomla[(1-h_R)\HL]\big\}_{\lambda\geq1}$ is bounded in $H^1({\Om}_\la)$. Moreover, since  $S_\la(1)\ppomla[(1-h_R)\HL]$ vanishes on the boundary of $\Oml$ we have the relation
$$
E\nabla S_\la(1)\ppomla[(1-h_R)\HL]=\nabla ES_\la(1)\ppomla[(1-h_R)\HL]
$$ 
which implies that   $\big\{ES_\la(1)\ppomla[(1-h_R)\HL]\big\}_{\lambda\geq1}$ is bounded in $H^1(\R^2)$. 
By the compactness of the  Sobolev imbedding $H^1(\R^2)\subset L^p_{loc}(\R^2)$, 
we infer that the set $ \big\{ES_\la(1)\ppomla[(1-h_R)\HL]\big\}_{\lambda\geq1}$ is relatively compact in  $L^p_{loc}(\R^2)$ for all $p\in(2,\infty)$. 

On the other hand,   calculations similar to those in \eqref{boundlq} with  $p\in(2,\infty)$ lead to the inequality 
\begin{equation*}
\nlpla p{\ppomla h_R\HL]}\leq  C\la^{1-\frac2p} \nlpdbig p{\frac1{|\cdot|}}{|x|>R\la}\leq C(p, \Omega) R^{\frac2p-1}.  
\end{equation*}
Using the decay estimate \eqref{S1} we infer that
\begin{equation}\label{ESLp}
\begin{split}
\nlpr p {ES_\la(1)\ppomla(h_R\HL)}&=\nlpla  p {S_\la(1)\ppomla(h_R\HL)} \\
&\leq C \nlpla  p {\ppomla(h_R\HL)} \leq C(p, \Omega) R^{\frac2p-1}.  
\end{split}
\end{equation}

Finally, since $ES_\la(1)\ppomla(h_R\HL)$ tends to zero  in $L^p(\R^2)$ as $R\to\infty$ uniformly in $\la$ and  since 
the family 
$ \{ES_\la(1)\ppomla[(1-h_R)\HL]\}_{\lambda\geq1}$ is relatively compact in  $L^p_{loc}(\R^2)$ for every fixed $R$, we infer that 
\begin{equation}\label{EHl}
E\hl(1)=ES_\la(1)\HL=ES_\la(1)\ppomla(h_R\HL)+ES_\la(1)\ppomla[(1-h_R)\HL]
\end{equation}
is relatively compact in  $L^p_{loc}(\R^2)$. 
\end{proof}

\begin{lemma}\label{lem2}
Let $\hl(t)$ be defined in \eqref{HSL} and $h_R$ be defined in \eqref{hR}.
For every $p\in (2,\infty)$, 
 we have that $\lim\limits_{R\to\infty}\|h_RE\hl(1)\|_{L^p(\R^2)}=0$  uniformly in $\la\geq 1$.
\end{lemma}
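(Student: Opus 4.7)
The plan is to re-use the decomposition from the proof of the previous lemma, but with a cutoff scale strictly smaller than the measurement scale $R$, namely
\begin{equation*}
E\hl(1) = ES_\la(1)\ppomla(h_{R/2}\HL) + ES_\la(1)\ppomla[(1-h_{R/2})\HL].
\end{equation*}
The first piece is handled by the estimate already established: the same computation leading to \eqref{ESLp}, applied with $R$ replaced by $R/2$, yields the global bound $\|ES_\la(1)\ppomla(h_{R/2}\HL)\|_{L^p(\R^2)} \leq C(p,\Omega) R^{2/p-1}$. Since $|h_R|\leq 1$ pointwise, multiplication by $h_R$ preserves this bound, which tends to zero uniformly in $\la\geq1$ as $R\to\infty$ because $p>2$.

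The second piece is the substantive part. Here the initial datum $(1-h_{R/2})\HL$ is compactly supported in $B(0,R)$, while $h_R$ is supported in the disjoint region $\{|x|\geq R\}$. So the question becomes: starting from data compactly supported in $B(0,R)$, does the Stokes evolution at time $t=1$ have small $L^p$-mass in $\{|x|\geq R\}$, uniformly in $\la$? Heuristically yes, by parabolic propagation: in unit time, mass spreads only a bounded distance, so the leakage into $\{|x|\geq R\}$ decays with $R$.

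To quantify this, I would set $v(t) = S_\la(t)\ppomla[(1-h_{R/2})\HL]$ and test the Stokes equation against $\phi_R^2 v$ with $\phi_R$ a smooth radial cutoff vanishing on $\{|x|<R\}$, equal to $1$ on $\{|x|>2R\}$, with $|\nabla\phi_R|\leq C/R$. This yields a localized energy inequality for $\int\phi_R^2|v|^2$ in which the dissipative term is favorable, the kinetic commutator costs a factor $R^{-1}$ from $\nabla\phi_R$, and the pressure term has to be controlled using $\Delta p=0$ together with the Dirichlet boundary condition. Interpolating the resulting localized $L^2$ decay with the uniform $L^\infty$ bound $\|\hl(1)\|_{L^\infty(\Oml)}\leq C$ — which follows from \eqref{S1bis} with $q=2$, $p=\infty$ and the scaling invariance recorded in Remark~\ref{rem:scal} — would upgrade the estimate to a bound $\|h_R E S_\la(1)\ppomla[(1-h_{R/2})\HL]\|_{L^p(\R^2)}\leq CR^{-\beta}$ for some $\beta>0$, uniformly in $\la$.

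The main obstacle is the pressure in the localized energy estimate, since on an exterior domain the Stokes pressure is a nonlocal object determined by the velocity and the boundary data; one has to make sure the estimates are scaling-invariant and do not pick up $\la$-dependence through the geometry of $\Oml$. A technically cleaner alternative, bypassing the pressure handling, would be to invoke Gaussian-type pointwise or $L^q$--$L^p$ kernel estimates for the Stokes semigroup on 2D exterior domains known from the literature, in which case the scaling invariance immediately provides the uniformity in $\la$.
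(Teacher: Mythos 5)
Your overall architecture (split the datum with a cutoff, kill the far part of the datum by the scaling estimate, then prove a ``no leakage to $|x|>R$'' estimate for the evolution of the near part) is exactly the paper's, and your treatment of the first piece is correct. But the second piece --- which is the entire substance of the lemma --- is left with a genuine gap that your own sketch acknowledges without resolving. Two concrete problems. First, the premise behind the ``parabolic propagation'' heuristic is not available at the level of the velocity: the actual initial datum for the evolution is $\ppomla[(1-h_{R/2})\HL]$, and the Leray projection destroys compact support (it adds a nonlocal gradient tail), so you are not starting from data supported in $B(0,R)$. Second, the localized energy estimate runs into the pressure, which is nonlocal and instantaneous; you would need decay of $p$ in the annulus $\{R<|x|<2R\}$ with constants uniform in $\la$, and you give no argument for this. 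The fallback you propose is also not sound as stated: Gaussian pointwise bounds for the Stokes kernel are false even in $\R^2$, because $e^{t\Delta}\pp$ involves Riesz transforms and its kernel decays only algebraically; no uniform-in-$\la$ kernel estimate of the required type is cited or, to my knowledge, available for exterior domains.

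The paper's resolution is worth comparing, because it sidesteps both obstacles at once by passing to the vorticity. Writing $v_\la(t)=S_\la(t)\ppomla[(1-h_{R_0})\HL]$ and $\om_\la=\curl v_\la$, one observes that $\curl$ annihilates the gradient introduced by $\ppomla$, so $h_R\om_\la(0)=-h_R\,\HL\cdot\nabla^\perp h_{R_0}=0$ once the supports separate: the localized \emph{vorticity} really does start from zero. Moreover $h_R$ is supported far from $\partial\Oml$, where the Stokes system reduces to the heat equation for $\om_\la$, which is local and pressure-free; hence $Eh_R\om_\la$ solves an inhomogeneous heat equation in all of $\R^2$ with sources $\Delta h_R\,\om_\la$ and $\dive(\nabla h_R\,\om_\la)$ supported where $\nabla h_R\neq0$. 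Duhamel, Young's inequality, and the uniform decay bounds $\|\nabla v_\la(t)\|_{L^2(\Oml)}\leq C t^{-\eta}$ then give $\|Eh_R\om_\la(1)\|_{L^q(\R^2)}\leq CR^{\frac2p-1}$, with the decay in $R$ coming simply from $\|\nabla h_R\|_{L^p}+\|\Delta h_R\|_{L^p}\leq CR^{\frac2p-1}$ for $p>2$. Finally the velocity is recovered from its curl and divergence via $\|Eh_Rv_\la(1)\|_{L^p(\R^2)}\leq C\|\nabla(Eh_Rv_\la(1))\|_{L^q(\R^2)}$ with $1/q=1/2+1/p$. If you want to complete your proof along your own lines you would have to carry out the pressure estimate uniformly in $\la$; the vorticity route makes that unnecessary.
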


\begin{proof}
Let $\ep>0$ be an arbitrary small constant and $R_0=R_0(\ep)$ be a large constant to be chosen later. 
We estimate the $L^p$-norm of $h_R E\hl(1)$ using the decomposition of $ E\hl(1)$ from \eqref{EHl} with $R=R_0$.
%
%
First, repeating the calculations from \eqref{ESLp} 
we have 
\begin{equation*}
\nlpr p {h_RES_\la(1)\ppomla(h_{R_0}\HL)}\leq  C(p, \Omega) R_0^{\frac2p-1}.    
\end{equation*}
Since the right-hand side tends  to 0 as $R_0\to\infty$ uniformly in $\la\geq 1$, there exists $R_0$ independent of $\la$ such that
\begin{equation*}
\nlpr p {h_RES_\la(1)\ppomla(h_{R_0}\HL)}\leq \ep\qquad\text{for all }\la\geq 1.  
\end{equation*}

Now, for fixed  $R_0$,  we  show  that
\begin{equation*}
\lim_{R\to\infty}h_REv_\la(1)=0\qquad\text{with}\quad v_\la(t)=  S_\la(t)\ppomla[(1-h_{R_0})\HL]
\end{equation*}
where the convergence is in the norm of $L^p(\R^2)$ and is uniform with respect to $\la\geq 1$.

First, it follows from relation \eqref{boundlq} that $v_\la(0)=\ppomla[(1-h_{R_0})\HL]$ is bounded in $L^q(\Oml)$ for each $1<q<2$, 
uniformly in $\la\geq 1$. Using the decay estimates for the Stokes equation stated in \eqref{S1} and \eqref{S2}, 
we infer that $v_\la$ verifies
\begin{equation}\label{estvla}
\nlpla q{v_\la(t)}\leq C(q) \quad\text{and}\quad    \nlpla 2{\nabla v_\la(t)}\leq C(\eta)t^{-\eta},\qquad\text{uniformly in} \quad \la\geq 1,
\end{equation}
for each $q\in (1,2)$, $\eta=1/q\in (1/2,1)$, and all $t>0$.

Let $\om_\la$ denote the curl of $v_\la$. The quantity $Eh_R\om_\la$ verifies the following equation in the full plane
\begin{equation*}
\partial_t( Eh_R\om_\la)-\Delta( Eh_R\om_\la)=E\bigl[\Delta h_R\om_\la-2\dive(\nabla h_R\om_\la)\bigr]
\qquad\text{in}\quad \R_+\times\R^2,
\end{equation*}
supplemented with the zero initial datum, because 
\begin{equation*}
\begin{split}
Eh_R\om_\la(0)&= Eh_R\curl v_\la(0)=Eh_R\curl\ppomla[(1-h_{R_0})\HL]\\
&=Eh_R\curl[(1-h_{R_0})\HL]=-Eh_R\HL\cdot\nabla^\perp h_{R_0}=0
\end{split}
\end{equation*}
for $R>2R_0$. 
In these calculations, we used the fact that, for any vector field $w$, the quantity $w-\ppomla w$ is a gradient, that $\HL$ is curl free, that $\supp h_R\subset\{|x|>R\}$ and that $\supp \nabla h_{R_0}\subset\{R_0<|x|<2R_0\}$. 

The Duhamel principle for the inhomogeneous heat equation in the full plane implies now that
\begin{equation}\label{EhR}
 Eh_R\om_\la(1)=\int_0^1 \frac1{4\pi(1-s)}e^{-\frac{|\cdot|^2}{4(1-s)}}\ast E\bigl[\Delta h_R\om_\la-2\dive(\nabla h_R\om_\la)\bigr](s)\,ds. 
\end{equation}
Let  $q\in (1,2)$ satisfy $1/q=1/2+1/p$. We estimate  the $L^q$-norm of $Eh_R\om_\la(1)$
using  relation  \eqref{EhR}  in the following way
\begin{align*}
\nlpr q{ Eh_R\om_\la(1)}
&\leq C\int_0^1  \frac1{1-s}\nlpr 1{e^{-\frac{|\cdot|^2}{4(1-s)}}}\nlpr q{\Delta h_R\om_\la}\,ds\\
&\hskip 4cm +C\int_0^1  \frac1{1-s}\nlpr 1{\nabla \bigl[e^{-\frac{|\cdot|^2}{4(1-s)}}\bigr]}\nlpr q{\nabla h_R\om_\la}\,ds\\
&\leq C\int_0^1\nlpr p{\Delta h_R}\nlpla2{\om_\la}+C\int_0^1\frac1{\sqrt{1-s}}\nlpr p{\nabla h_R}\nlpla2{\om_\la}\,ds\\
&\leq C\int_0^1(\nlpr p{\Delta h_R}+\nlpr p{\nabla h_R})(1+(1-s)^{-\frac12})s^{-\frac34}\,ds\\
&\leq CR^{\frac2p-1},
\end{align*}
where we used \eqref{estvla}. We conclude, using again \eqref{estvla}, that
\begin{align*}
\nlpr q{\curl(Eh_Rv_\la(1))}
&\leq \nlpr q{Eh_R\om_\la(1)}+\nlpr q{Ev_\la(1)\cdot\nabla^\perp h_R}\\
&\leq   \nlpr q{Eh_R\om_\la(1)}+\nlpla q{v_\la(1)} \nlpr\infty{\nabla h_R} \\
&\leq CR^{\frac2p-1}.
\end{align*}
On the other hand, we can also bound
\begin{equation*}
  \nlpr q{\dive(Eh_Rv_\la(1))}=  \nlpr q{Ev_\la(1)\cdot\nabla h_R}\leq \nlpla q{v_\la(1)}  \nlpr\infty{\nabla h_R}\leq \frac CR.
\end{equation*}
Finally, putting together these estimates, we obtain 
\begin{equation*}
\begin{split}
\nlpr p{Eh_Rv_\la(1)}&\leq C \nlpr q{\nabla(Eh_Rv_\la(1))}\\
&\leq  C\nlpr q{\dive(Eh_Rv_\la(1))}+C  \nlpr q{\curl(Eh_Rv_\la(1))}\leq CR^{\frac2p-1} \stackrel{R\to\infty}{\longrightarrow}0
\end{split}
\end{equation*}
uniformly in $\la\geq 1$. This completes the proof of Lemma  \ref{lem2}.
\end{proof}


\section{Proof of the main result}
\label{finalproof}

The proof of Theorem \ref{mainthm}  proceeds  in two steps. 
First, we reduce the problem to the  study of initial velocities, which are small in the $L^{2,\infty}$-norm. 
In the second step, we assume that  $u_0$ is sufficiently small in $L^{2,\infty}(\Omega)$ and 
we show that if  the solution of the Stokes problem \eqref{eqS}-\eqref{iniS} converges towards  the Lamb-Oseen vortex, 
then so does the solution of the  nonlinear problem. 
Once these two steps are completed, Theorem \ref{mainthm} follows from Theorem \ref{thm:oseen}.

\subsection{Reduction to the case of small initial velocity.}

We begin by recalling a classical result on the $L^2$-decay of weak solutions to problem \eqref{eq}-\eqref{ini}.
\begin{theorem}[{Borchers \& Miyakawa  \cite[Thm.~1.2]{MR1158939}}]\label{thm:L2decay}
 For every $\wu_0\in \L2s$ there is a unique weak solution 
$\tu\in L^\infty((0,\infty);L^2(\Om))\cap L^2_{loc}([0,\infty);H^1(\Om))$, 
of  problem \eqref{eq}-\eqref{ini} with $u_0=\tu_0$ as an initial datum, 
such that
$
  \lim\limits_{t\to\infty}\|\tu(t)\|_{L^2}=0.
$
\end{theorem}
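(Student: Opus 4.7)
The argument splits naturally into two standard parts (existence and uniqueness) and one substantive part (the $L^2$-decay).

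\medskip
\noindent\textbf{Existence and uniqueness.} Existence of $\tu$ follows from the classical Galerkin scheme on $\L2s$ with the eigenfunctions of the Stokes operator as basis. The finite-dimensional approximations $\tu_n$ satisfy the energy identity
\[
\|\tu_n(t)\|_{L^2}^2 + 2\int_0^t\|\nabla\tu_n(\tau)\|_{L^2}^2\, d\tau = \|\wu_0\|_{L^2}^2,
\]
giving uniform bounds in $L^\infty(0,\infty;\L2s)\cap L^2_{loc}([0,\infty);H^1_0(\Om))$. An Aubin--Lions compactness extraction delivers a weak limit $\tu$ solving \eqref{eq}--\eqref{ini} with $\wu_0$ as initial datum. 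In two dimensions the Ladyzhenskaya inequality $\|v\|_{L^4}^2\leq C\|v\|_{L^2}\|\nabla v\|_{L^2}$ makes the nonlinear term sharp enough that the limit satisfies the full energy identity rather than merely an inequality. For uniqueness, the difference $w=\tu_1-\tu_2$ of two such solutions satisfies
\[
\tfrac12\tfrac{d}{dt}\|w\|_{L^2}^2 + \|\nabla w\|_{L^2}^2 \leq \|w\|_{L^4}^2\|\nabla\tu_2\|_{L^2} \leq \tfrac12\|\nabla w\|_{L^2}^2 + C\|w\|_{L^2}^2\|\nabla\tu_2\|_{L^2}^2,
\]
and Gronwall (with $\|\nabla\tu_2\|_{L^2}^2\in L^1_{loc}$) forces $w\equiv 0$.

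\medskip
\noindent\textbf{$L^2$-decay.} The energy identity gives that $t\mapsto\|\tu(t)\|_{L^2}$ is non-increasing, so $M:=\lim_{t\to\infty}\|\tu(t)\|_{L^2}\geq 0$ exists, and that $\int_0^\infty\|\nabla\tu\|_{L^2}^2\,d\tau<\infty$. The task is to show $M=0$. I would work from the Duhamel representation
\[
\tu(t) = S(t-s)\tu(s) - \int_s^t S(t-\tau)\ppom\dive(\tu\otimes\tu)(\tau)\,d\tau
\]
and control the two pieces separately. The Stokes operator on $\L2s$ is non-negative self-adjoint with trivial kernel (integration by parts against any $u\in H^1_0\cap\L2s$ with $\ppom\Delta u=0$ forces $\nabla u\equiv 0$, hence $u\equiv 0$ by the Dirichlet condition), so the spectral theorem yields $\|S(t-s)\tu(s)\|_{L^2}\to 0$ as $t\to\infty$ for every fixed $s$. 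For the nonlinear integral, I would fix $s$ large enough that $\int_s^\infty\|\nabla\tu\|_{L^2}^2\,d\tau<\varepsilon$ and combine Ladyzhenskaya $\|\tu\otimes\tu\|_{L^2}\leq C\|\tu\|_{L^2}\|\nabla\tu\|_{L^2}$ with the smoothing family \eqref{S1}--\eqref{S3}, bootstrapping through slightly super-critical $L^p$-estimates ($p>2$, where the kernel in \eqref{S3} is integrable in time) to circumvent the borderline case $p=q=2$. Sending first $t\to\infty$, then $s\to\infty$, then $\varepsilon\to 0$ forces $M=0$.

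\medskip
\noindent\textbf{Main obstacle.} The delicate step is the decay, and specifically the estimate of the nonlinear Duhamel integral in $L^2$: the kernel $(t-\tau)^{-1/2}$ coming from \eqref{S3} at $p=q=2$ sits precisely at the edge of integrability, so a crude Cauchy--Schwarz against $\|\nabla\tu\|_{L^2}$ does not suffice. Extracting decay requires combining the tail-smallness $\int_s^\infty\|\nabla\tu\|_{L^2}^2\,d\tau\to 0$, off-diagonal smoothing estimates for $p>2$, and the spectral decay of the Stokes semigroup at the bottom of its spectrum, which is exactly the content of the Borchers--Miyakawa analysis.
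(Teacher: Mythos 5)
First, a point of comparison: the paper does not prove this statement at all --- it is imported verbatim from Borchers and Miyakawa \cite[Thm.~1.2]{MR1158939} and used as a black box. So your proposal must be judged on its own merits, and there it has one technical flaw and one genuine gap. The flaw is in the existence part: on an exterior domain the Stokes operator has no compact resolvent and hence no eigenfunction basis, so the Galerkin scheme cannot be run on ``the eigenfunctions of the Stokes operator''; one needs a different countable basis or an invading-domains argument, and the Aubin--Lions step must be localized (the embedding $H^1(\Om)\subset L^2(\Om)$ is only locally compact) with a diagonal extraction. The energy estimates, the Ladyzhenskaya inequality, and the 2D uniqueness computation are otherwise standard and correct.

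The gap is the decay, which is the entire content of the theorem. You correctly diagnose that the Duhamel term at $p=q=2$ is borderline, but the proposed remedy --- ``bootstrapping through slightly super-critical $L^p$-estimates'' together with ``the spectral decay of the Stokes semigroup at the bottom of its spectrum, which is exactly the content of the Borchers--Miyakawa analysis'' --- is not an argument: it defers the crux to the very reference whose theorem is being proved. Passing to $p>2$ does not close the loop, because the only control on $\|\tu\|_{L^p}$ for $p>2$ available at this stage comes from interpolating $\|\tu\|_{L^2}$ with $\|\nabla\tu\|_{L^2}$, which yields time-integrated information rather than the pointwise-in-time decay an iteration would need; and the spectral theorem gives $\|S(t)v_0\|_{L^2}\to0$ with no rate, so nothing can be absorbed by smallness as in Proposition \ref{reduction-linear}. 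The actual proof in \cite{MR1158939} adapts Schonbek's Fourier-splitting method to unbounded domains via the spectral decomposition of the Stokes operator $A$: writing $\Pi_\rho$ for the spectral projection of $A$ onto $[0,\rho]$, the energy inequality yields $\frac{d}{dt}\|\tu\|_{L^2}^2+2\rho\|\tu\|_{L^2}^2\leq 2\rho\|\Pi_\rho\tu\|_{L^2}^2$, and the substantive work is the low-frequency estimate of $\|\Pi_\rho\ppom\dive(\tu\otimes\tu)\|_{L^2}$ for small $\rho$, which replaces the Plancherel computation available in the whole space. None of this machinery appears in your sketch, so the decay claim --- the statement to be proved --- remains unestablished.
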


We show now the following auxiliary result.

\begin{lemma}\label{l4}
Let $u$ be a solution to \eqref{eq}-\eqref{ini} with $u_0$ of the form \eqref{decompu0} with arbitrary $\wu_0\in\L2s$ and $\alpha\in \R$.
Denote by $\tu$ the weak solution from Theorem \ref{thm:L2decay}.
For every $t_0>0$, we have that
\begin{equation*}
\sup_{[0,t_0]}\nlp2{u(t)-\tu(t)-\alpha S(t)\H}\to 0 \qquad \text{as}\quad \alpha \to 0.
  \end{equation*}
\end{lemma}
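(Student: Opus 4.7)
The plan is to derive an energy estimate for $w(t)\equiv u(t)-\wu(t)-\al S(t)\H$, where $u$ is the given solution and $\wu$ is the Leray--Hopf solution of Theorem \ref{thm:L2decay}. Set $v(t)\equiv\al S(t)\H$. Because $v(0)=\al\H=u_0-\wu_0$, one has $w(0)=0$; moreover $w$ is divergence free, vanishes on $\Gamma$, and, using $\partial_t v-\Delta v+\nabla p_v=0$, satisfies
\begin{equation*}
\partial_t w-\Delta w+\nabla q=-\bigl(u\cdot\nabla u-\wu\cdot\nabla\wu\bigr)\quad\text{in}\quad \Om.
\end{equation*}

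Substituting $u=\wu+v+w$ in the nonlinearity and cancelling $\wu\cdot\nabla\wu$, the right-hand side splits as $F+\mathcal L(w)+w\cdot\nabla w$ with $F\equiv\wu\cdot\nabla v+v\cdot\nabla\wu+v\cdot\nabla v$ and $\mathcal L(w)\equiv\wu\cdot\nabla w+v\cdot\nabla w+w\cdot\nabla\wu+w\cdot\nabla v$. Testing against $w$ in $L^2(\Om)$, the trilinear forms $(\wu\cdot\nabla w,w)$, $(v\cdot\nabla w,w)$, $(w\cdot\nabla w,w)$ all vanish by the divergence-free and zero boundary conditions, yielding
\begin{equation*}
\tfrac12\tfrac{d}{dt}\|w\|_{L^2}^2+\|\nabla w\|_{L^2}^2=-(F,w)-(w\cdot\nabla\wu,w)-(w\cdot\nabla v,w).
\end{equation*}

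I would bound each contribution as follows. The term $(w\cdot\nabla\wu,w)$ is classical: the 2D Ladyzhenskaya inequality $\|w\|_{L^4}^2\leq C\|w\|_{L^2}\|\nabla w\|_{L^2}$ with Young's inequality gives an absorbable $\tfrac18\|\nabla w\|_{L^2}^2$ plus $C\|\nabla\wu\|_{L^2}^2\|w\|_{L^2}^2$, whose prefactor is in $L^1((0,t_0))$ by the Leray--Hopf energy estimate. For $(F,w)$, integration by parts in each trilinear summand together with the Stokes bounds $\|v(t)\|_{L^p}\leq C|\al|t^{-1/2+1/p}$ from \eqref{S1bis} yields a further absorbable $\tfrac14\|\nabla w\|_{L^2}^2$ plus a source $B_\al(t)$ with $\int_0^{t_0}B_\al=o(1)$ as $\al\to 0$. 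The delicate term is $(w\cdot\nabla v,w)=-(w\cdot\nabla w,v)$: the pointwise bound $\|v(t)\|_{L^\infty}\leq C|\al|t^{-1/2}$ given by \eqref{S1bis} would generate a non-integrable $t^{-1}$ weight in the Gronwall coefficient. The idea is to exploit instead the \emph{uniform-in-time} estimate $\|v(t)\|_{L^{2,\infty}}\leq C|\al|$ from \eqref{S1bis2}, together with a scale-invariant 2D control for the divergence-free product $w\cdot\nabla w$ (either via $L^{2,1}$--$L^{2,\infty}$ Lorentz duality combined with a Sobolev--Lorentz embedding, or via a Coifman--Lions--Meyer--Semmes $\mathcal H^1$--BMO argument), in order to get $|(w\cdot\nabla w,v)|\leq C|\al|\|w\|_{L^2}\|\nabla w\|_{L^2}\leq\tfrac18\|\nabla w\|_{L^2}^2+C|\al|^2\|w\|_{L^2}^2$.

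Collecting the estimates produces
\begin{equation*}
\tfrac{d}{dt}\|w\|_{L^2}^2+\|\nabla w\|_{L^2}^2\leq\bigl(C\|\nabla\wu(t)\|_{L^2}^2+C|\al|^2\bigr)\|w\|_{L^2}^2+2B_\al(t),
\end{equation*}
with $\int_0^{t_0}\|\nabla\wu\|_{L^2}^2\,dt<\infty$ and $\int_0^{t_0}B_\al(t)\,dt\to 0$ as $\al\to 0$. Since $w(0)=0$, Gronwall's inequality then yields $\sup_{t\in[0,t_0]}\|w(t)\|_{L^2}^2\to 0$ as $\al\to 0$, which is the claim. The main obstacle is the handling of $(w\cdot\nabla v,w)$: the Stokes estimates for $v$ in $L^p(\Om)$ with $p>2$ all exhibit the same critical scaling $t^{-1/2+1/p}$, making any resulting Gronwall coefficient fail to be $L^1$ in time near $0$; one is therefore forced to exploit the uniform control of $v$ in the scale-invariant space $L^{2,\infty}(\Om)$ and to couple it with a 2D-specific refined estimate for the trilinear form.
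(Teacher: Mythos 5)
Your overall strategy --- an energy estimate for $w=u-\wu-\al S(t)\H$ on $[0,t_0]$ followed by Gronwall, starting from $w(0)=0$ --- is exactly the paper's, and your treatment of $(w\cdot\nabla\wu,w)$ via the Ladyzhenskaya inequality is correct. The gap is in the terms involving $v=\al S(t)\H$. The inequality you propose for the ``delicate'' term, $|(w\cdot\nabla w,v)|\leq C\|v\|_{L^{2,\infty}(\Omega)}\|w\|_{L^2(\Omega)}\|\nabla w\|_{L^2(\Omega)}$, is false: under the dilation $w\mapsto w(\lambda\,\cdot)$, $v\mapsto v(\lambda\,\cdot)$ the left-hand side scales like $\lambda^{-1}$ while the right-hand side scales like $\lambda^{-2}$, so testing with fields supported in a small ball inside $\Om$ and letting $\lambda\to\infty$ shows that no such constant can exist. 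Neither route you suggest can rescue it: Lorentz duality would require $w\cdot\nabla w\in L^{2,1}$, which is not controlled by $\|w\|_{L^2}\|\nabla w\|_{L^2}$, and the Coifman--Lions--Meyer--Semmes argument needs $v\in BMO$, whereas $L^{2,\infty}\not\subset BMO$ (the function $1/|x|$ is a counterexample in 2D). Moreover, the same problem already infects your source term $F$: using only the critical bound $\|v(t)\|_{L^4}\leq C|\al|t^{-1/4}$ from \eqref{S1bis}, the contribution of $v\cdot\nabla v$ produces $\|v\|_{L^4}^4\sim t^{-1}$, which is not integrable near $t=0$, so your claim $\int_0^{t_0}B_\al\,dt=o(1)$ would fail as well.

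The difficulty you are fighting is in fact not present, because the datum of the linear part is not a generic element of $L^{2,\infty}_\sigma(\Omega)$ but the specific harmonic field $\H$, which satisfies $|\H(x)|\leq C/|x|$ on $\Omega\subset\R^2\setminus B(0,1)$ and therefore belongs to $L^q(\Om)$ for every $q\in(2,\infty]$. Estimate \eqref{S1} then gives the subcritical bounds $\|S(t)\H\|_{L^\infty(\Omega)}\leq K_1t^{-1/q}\|\H\|_{L^q(\Omega)}$ for any $q>2$ and $\|S(t)\H\|_{L^4(\Omega)}\leq K_1\|\H\|_{L^4(\Omega)}$, whence $\int_0^{t_0}\|S(t)\H\|_{L^\infty}^2\,dt<\infty$ and $\int_0^{t_0}\|S(t)\H\|_{L^4}^4\,dt<\infty$. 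With these, the crude H\"older bounds $|(w\cdot\nabla w,v)|\leq\|v\|_{L^\infty}\|w\|_{L^2}\|\nabla w\|_{L^2}$ and $|(v\cdot\nabla v,w)|\leq\|v\|_{L^4}^2\|\nabla w\|_{L^2}$ (and similarly for the cross terms with $\wu$) suffice: after Young's inequality they yield Gronwall coefficients in $L^1(0,t_0)$ and a source of size $O(\al^2)$, which is precisely how the paper closes the argument. Once you replace your Lorentz/Hardy-space step by this observation, your proof coincides with the paper's.
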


\begin{proof}
We show a $L^2$-estimate for the function $z(t)\equiv u(t)-\tu(t)-\alpha S(t)\H$ which satisfies the following equation
\begin{equation}\label{eqz}
\partial_t z-\Delta z+(\tu+z+\al\th)\cdot\nabla(\tu+z+\al\th)-\tu\cdot\nabla\tu+\nabla \overline p=0,
\end{equation}
where
$
  \th(t)=S(t)\H.
$

We multiply equation \eqref{eqz} by $z$ and integrate in the space variable  to obtain, after some integrations by parts, 
\begin{equation}\label{zl2}
\begin{split}
\frac12\dt\nlp2z^2+\nlp2{\nabla z}^2  
=&\al\int \tu\cdot\nabla z\cdot\th-\int z\cdot\nabla\tu\cdot z+\al\int z\cdot\nabla z\cdot\th\\
&+\al\int\th\cdot\nabla z\cdot\tu +\al^2\int\th\cdot\nabla z\cdot\th\\
\equiv& I_1+I_2+I_3+I_4+I_5.
\end{split}
\end{equation}
Using the following interpolation inequality
\begin{equation*}
\nlp4f\leq C\nlp2f^{\frac12}\nlp2{\nabla f}^{\frac12}\qquad \text{for every}\quad f\in H^1_0(\Om),
\end{equation*}
we  bound each term on the right-hand side of \eqref{zl2} in the following way
\begin{align*}
I_1&\leq| \al |\nlp2\tu\nlp2{\nabla z}\nlp\infty\th  
\leq\frac16\nlp2{\nabla z}^2+C\al^2\nlp2\tu^2\nlp\infty\th^2,\\
I_2&\leq \nlp4z^2\nlp2{\nabla\tu}\leq C\nlp2z\nlp2{\nabla z}\nlp2{\nabla\tu} \leq\frac16\nlp2{\nabla z}^2+C\nlp2z^2\nlp2{\nabla \tu}^2,\\
I_3&\leq| \al |\nlp2z\nlp2{\nabla z}\nlp\infty\th\leq\frac16\nlp2{\nabla z}^2+C\al^2\nlp2z^2\nlp\infty\th^2,\\
I_4&\leq| \al |\nlp\infty\th\nlp2{\nabla z}\nlp2\tu\leq\frac16\nlp2{\nabla z}^2+C\al^2\nlp\infty\th^2\nlp2\tu^2,\\
I_5&\leq\al^2\nlp4\th^2\nlp2{\nabla z}\leq\frac16\nlp2{\nabla z}^2+C\al^4\nlp4\th^4.
\end{align*}
Plugging the above inequalities  into \eqref{zl2} yields
\begin{equation*}
\begin{split}
  \dt\nlp2z^2+\frac13\nlp2{\nabla z}^2
\leq& C\nlp2z^2(\nlp2{\nabla\tu}^2+\al^2\nlp\infty\th^2)\\
&+C\al^2\nlp\infty\th^2\nlp2\tu^2 +C\al^4\nlp4\th^4.
\end{split}
\end{equation*}
Recall that  $z_0=0$ and $H_1(t)=S(t)H_\Om$. Thus, the Gronwall inequality implies
\begin{equation*}
\begin{split}
\sup_{[0,t_0]}\nlp2z^2\leq& C\al^2\Bigl(\int_0^{t_0} \|S(\tau)H_\Omega\|^2_{L^\infty(\Omega)}
   \nlp2{\tu(\tau)}^2\,d\tau +\al^2\int_0^{t_0}\nlp4{S(\tau)H_\Omega}^4\,d\tau\Bigr)\\
&\times\exp\Bigl(C\int_0^{t_0}\nlp2{\nabla\tu(\tau)}^2 \,d\tau+C\al^2\int_0^{t_0}\nlp\infty{S(\tau)H_\Omega}^2\,d\tau\Bigr). 
\end{split}
\end{equation*}
Since $\tu\in L^\infty((0,t_0);L^2(\Om))\cap L^2((0,t_0);H^1(\Om))$ and since $\H\in L^p(\Om)$ for all $p\in (2,\infty]$, 
we infer from the decay estimate \eqref{S1} that the right-hand side of the above inequality  is finite and tends to zero 
as $\al\to 0$. This completes the proof of  Lemma \ref{l4}.
\end{proof}

In the following, we need a simple consequence of this lemma.

\begin{corollary}\label{corol}
Under the assumptions of Lemma \ref{l4},
for every $\ep>0$, there exists $\al_0=\al_0(\Omega, \tu_0,\ep)>0$ and $T_0=T_0(\Omega, \tu_0,\ep)\geq 0$ such that if $|\al|\leq \al_0$ then $\|u(T_0)\|_{L^{2,\infty}(\Omega)}\leq\ep.$  
\end{corollary}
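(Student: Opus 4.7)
The strategy is to decompose $u(T_0)$ using the auxiliary function from Lemma \ref{l4}, namely
\[
u(T_0) \;=\; \tu(T_0) \;+\; \al S(T_0)\H \;+\; z(T_0),
\qquad z(t) \equiv u(t)-\tu(t)-\al S(t)\H,
\]
and control each of the three pieces in the $L^{2,\infty}$-norm. The overall bound then follows from the triangle inequality in $L^{2,\infty}(\Om)$ together with the elementary Chebyshev embedding $\|f\|_{L^{2,\infty}(\Om)}\leq \|f\|_{L^{2}(\Om)}$. A crucial point is the order in which the parameters are chosen: first $T_0$ (depending only on $\wu_0$ and $\ep$), then $\al_0$ (depending on $\wu_0$, $\Omega$, $T_0$ and $\ep$).

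For the first piece, by Theorem \ref{thm:L2decay} we have $\|\tu(t)\|_{L^2(\Om)} \to 0$ as $t\to\infty$, so I would select $T_0=T_0(\wu_0,\ep)$ so large that $\|\tu(T_0)\|_{L^2(\Om)}\leq \ep/3$, and hence $\|\tu(T_0)\|_{L^{2,\infty}(\Om)}\leq \ep/3$ by the embedding.

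For the second piece, I would invoke the time-uniform bound \eqref{S1bis2} from Proposition \ref{ellpest} with $q=2$, which yields
\[
\|\al S(T_0)\H\|_{L^{2,\infty}(\Om)} \leq |\al|\, K_2 \|\H\|_{L^{2,\infty}(\Om)}.
\]
Note $\H\in L^{2,\infty}_\sigma(\Om)$ since $\H$ behaves like $x^\perp/(2\pi|x|^2)$ at infinity and is smooth inside. Thus I can choose a first threshold $\al_1=\al_1(\Om,\ep)>0$ so that, for every $|\al|\leq \al_1$, the second piece is at most $\ep/3$, and this threshold is independent of $T_0$.

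Finally, with $T_0$ now fixed, I apply Lemma \ref{l4} with $t_0=T_0$: it provides $\sup_{[0,T_0]}\|z(t)\|_{L^2(\Om)} \to 0$ as $\al\to 0$. Therefore there exists a second threshold $\al_2=\al_2(\Om,\wu_0,T_0,\ep)>0$ such that $\|z(T_0)\|_{L^2(\Om)}\leq \ep/3$ whenever $|\al|\leq \al_2$, and again the embedding gives $\|z(T_0)\|_{L^{2,\infty}(\Om)}\leq \ep/3$. Setting $\al_0=\min(\al_1,\al_2)$ and summing the three contributions yields $\|u(T_0)\|_{L^{2,\infty}(\Om)}\leq \ep$, which completes the proof. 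There is no real obstacle here; the only point requiring care is to fix $T_0$ first (so that $\al_2$ may legitimately depend on it) before shrinking $|\al|$.
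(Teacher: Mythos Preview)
Your proof is correct and follows essentially the same approach as the paper's own proof: the same three-term decomposition, the same use of Theorem~\ref{thm:L2decay} to fix $T_0$, the same application of \eqref{S1bis2} for the $\al S(T_0)\H$ term, and the same appeal to Lemma~\ref{l4} (with $T_0$ already fixed) for the remainder. Your explicit remark about the order of choosing parameters and the use of the embedding $L^2\hookrightarrow L^{2,\infty}$ are precisely what the paper does implicitly.
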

\begin{proof}
Let $\ep>0$ be arbitrary.
First, by Theorem \ref{thm:L2decay},  we choose $T_0$ so large to have  $\nlp2{\tu(T_0)}\leq \ep/3$. 
Next, by \eqref{S1bis2}, we have the following bound
\begin{equation*}
\|\al S(t)\H\|_{L^{2,\infty}(\Omega)}\leq K_2|\al|  \|\H\|_{L^{2,\infty}(\Omega)}\leq \frac\ep3,
\end{equation*}
provided that
$ 
\al_0\leq  \ep/\big(3K_2 \|\H\|_{L^{2,\infty}(\Omega)}\big). 
$ 
Finally,  we infer from Lemma \ref{l4} that if $\al_0$ is sufficiently small,  then
\begin{equation*}
\sup_{[0,T_0]}\nlp2{u(t)-\tu(t)-\al S(t)\H}\leq \frac\ep3.  
\end{equation*}
Consequently,  
\begin{equation*}
\begin{split}
\|u(T_0)\|_{L^{2,\infty}(\Omega)}\leq &  \|u(T_0)-\tu(T_0)-\al S(T_0)\H\|_{L^{2,\infty}(\Omega)}+\|\tu(T_0)\|_{L^{2,\infty}(\Omega)}\\
&+\|\al S(T_0)\H\|_{L^{2,\infty}(\Omega)}\leq\ep.
\end{split}
\end{equation*}
\end{proof}

\subsection{Large time asymptotics for small velocities.}

Now, we  show that, for sufficiently small initial conditions,
 if the linear evolution converges to the Lamb-Oseen vortex, then so does the nonlinear evolution. 
This result is stated in the following proposition.

\begin{proposition}\label{reduction-linear}
Let $u_0\in L^{2,\infty}_\sigma(\Omega)$ and denote by  $u=u(t,x)$ the corresponding solution to \eqref{eq}--\eqref{ini}. 
There exists $\ep=\ep(\Omega)>0$ such that if $\max\{\|u_0\|_{L^{2,\infty}(\Omega)}, |\al|\}\leq\ep$ and if
\begin{equation}  \label{limsupx}
\lim_{t\to\infty}t^{\frac12-\frac1p}  \nlp{p}{S(t)u_0-\al \Theta(t)}=0 \quad \text{for every}\quad p\in (2,\infty)
\end{equation}
then 
\begin{equation}
  \label{limsup}
\lim_{t\to\infty}t^{\frac12-\frac1p}  \nlp{p}{u(t)-\al \Theta(t)}=0 \quad \text{for every}\quad p\in (2,\infty).
\end{equation}
\end{proposition}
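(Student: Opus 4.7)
The plan is to work with the mild formulation
\[
u(t)=S(t)u_0-B(u,u)(t),\qquad B(f,g)(t):=\int_0^t S(t-s)\,\ppom\dive(f\otimes g)(s)\,ds,
\]
and to run a direct $\limsup$ stabilization argument on $v(t):=u(t)-\al\Theta(t)$. The Kozono--Yamazaki theory, combined with the scale-invariant estimates \eqref{S1bis} and \eqref{S3}, produces a unique global mild solution with $\sup_{t>0}t^{1/2-1/p}\|u(t)\|_{L^p(\Omega)}\leq C_p\|u_0\|_{L^{2,\infty}(\Omega)}$ for every $p\in(2,\infty)$. In particular $v$ belongs to $X_p:=\{f:\|f\|_{X_p}:=\sup_{t>0}t^{1/2-1/p}\|f(t)\|_{L^p(\Omega)}<\infty\}$ with $\|v\|_{X_p}\leq C_p(\ep+|\al|)$, which is small.

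The key algebraic observation is that $\Theta(t,x)=\psi(t,|x|)\,x^\perp$ with $\psi$ radial, so a direct computation yields $\Theta\cdot\nabla\Theta=-\psi^2\,x=\nabla\Phi$ with $\Phi(t,x)=-\int_0^{|x|}s\,\psi(t,s)^2\,ds$. Since $\nabla\Phi\in L^p(\Omega)$ for every $p>2/3$ and the Leray projection annihilates gradients, $\ppom\dive(\Theta\otimes\Theta)\equiv 0$, so the quadratic-in-$\al$ term disappears and $v$ satisfies
\[
v(t)=\bigl[S(t)u_0-\al\Theta(t)\bigr]-B(v,v)(t)-\al B(v,\Theta)(t)-\al B(\Theta,v)(t).
\]

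The quantitative engine is the bilinear estimate obtained from \eqref{S3} with $q=2$. Using $\|f\otimes g\|_{L^2(\Omega)}\leq\|f\|_{L^4(\Omega)}\|g\|_{L^4(\Omega)}$ and the bound $\|\Theta(s)\|_{L^4(\R^2)}\leq Cs^{-1/4}$, one gets
\[
\|B(f,g)(t)\|_{L^p(\Omega)}\leq C_p\int_0^t(t-s)^{-1+1/p}\|f(s)\|_{L^4(\Omega)}\|g(s)\|_{L^4(\Omega)}\,ds,\qquad p\in(2,\infty).
\]
I would then introduce $\mu(T):=\sup_{t\geq T}t^{1/4}\|v(t)\|_{L^4(\Omega)}$ (finite, non-increasing) and its limit $M:=\limsup_{t\to\infty}t^{1/4}\|v(t)\|_{L^4(\Omega)}$. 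Splitting $\int_0^t=\int_0^T+\int_T^t$: on $[0,T]$ the $X_4$ bound together with $(t-s)^{-3/4}\leq (t/2)^{-3/4}$ produces a contribution of order $\|v\|_{X_4}^2\,T^{1/2}t^{-3/4}$, which vanishes as $t\to\infty$ for each fixed $T$; on $[T,t]$, the bound $\|v(s)\|_{L^4(\Omega)}\leq\mu(T)s^{-1/4}$ combined with the Beta identity $\int_0^t(t-s)^{-3/4}s^{-1/2}\,ds=Ct^{-1/4}$ gives a contribution bounded by $C\bigl(\mu(T)^2+|\al|\mu(T)\bigr)t^{-1/4}$. Multiplying through by $t^{1/4}$, taking $\limsup_{t\to\infty}$ using \eqref{limsupx} at $p=4$, and then sending $T\to\infty$ so that $\mu(T)\to M$, I obtain
\[
M\leq C\,M^2+2C|\al|\,M.
\]
Since $M\leq\|v\|_{X_4}\leq C_0(\ep+|\al|)$ is small, this inequality forces $M=0$, i.e.\ $t^{1/4}\|v(t)\|_{L^4(\Omega)}\to 0$ as $t\to\infty$.

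Finally, the same splitting applied to $t^{1/2-1/p}\|B(f,g)(t)\|_{L^p(\Omega)}$ upgrades the $L^4$-decay to every $p\in(2,\infty)$: the $[0,T]$ piece vanishes as $t\to\infty$ for fixed $T$, while the $[T,t]$ piece is dominated by $C_p\bigl(\mu(T)^2+|\al|\mu(T)\bigr)$, which tends to $0$ as $T\to\infty$ since $\mu(T)\to 0$; combining with \eqref{limsupx} yields \eqref{limsup}. The main obstacle I anticipate is the rigorous justification that $\ppom\dive(\Theta\otimes\Theta)\equiv 0$ on the exterior domain $\Omega$: the gradient structure is classical on $\R^2$, but one has to check the $L^p$-integrability of $\nabla\Phi$ on $\Omega$ and invoke the $L^p$-boundedness of $\ppom$; once this is in place, the remaining work is careful bookkeeping of the Duhamel splitting.
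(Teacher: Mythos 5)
Your proposal is correct and follows essentially the same route as the paper: the mild formulation, the observation that $\ppom\dive(\Theta\otimes\Theta)=0$ because $\Theta\cdot\nabla\Theta$ is a gradient, the bilinear estimate from \eqref{S3} with $q=2$, a smallness-driven $\limsup$ absorption at $p=4$, and then the bootstrap to all $p\in(2,\infty)$ using the already-established $L^4$ decay. The only difference is cosmetic: you extract the $\limsup$ by splitting the Duhamel integral at a fixed time $T$ and using the monotone quantity $\mu(T)$, whereas the paper rescales $s=t\tau$ and applies dominated convergence to $\zeta(t\tau)$; both yield the same absorption inequality.
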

\begin{proof}
It follows  from the results in \cite[Thm.~3]{KY95}  that
for every  $p\in (2,\infty) $ there exists a constant $C(p)>0$ such that 
\begin{equation}\label{KY:dec}
\sup_{t>0}t^{\frac12-\frac1p}  \nlp{p}{u(t)}\leq C(p)\ep,
\end{equation}
provided $\ep>0$ is sufficiently small.

First, we show relation \eqref{limsup} for $p=4$. The Duhamel principle  allows to  rewrite problem \eqref{eq}-\eqref{ini}
as the integral equation
\begin{equation*}
u(t)= S(t)u_0-\int_0^t S(t-s)\ppom\dive(u\otimes u)(s)\, ds.
\end{equation*}
Subtracting  the Lamb-Oseen vortex $\Theta$ on the both sides of the above relation we get
\begin{align*}
 u(t)-\al\Theta(t)&= S(t)u_0-\al\Theta(t)-\int_0^t S(t-s)\ppom\dive(u\otimes u)(s)\, ds\\
&= S(t)u_0-\al\Theta(t)-\int_0^t S(t-s)\ppom\dive\big[(u-\al\Theta)\otimes u+\al\Theta\otimes(u-\al \Theta)\big](s)\, ds,
\end{align*}
because  $\ppom\dive(\Theta\otimes \Theta)=0$. This is  a consequence of the fact that  the vector field 
 $\Theta$ is  orthogonal to the gradient  of a  radial function so that $\dive(\Theta\otimes\Theta)$ is a gradient.

Now,  computing  the $L^4$-norm of the above equality,  using  the decay estimates for the Stokes semigroup \eqref{S3},
the H\"older inequality, the assumption on $\al$, and estimate \eqref{KY:dec} we obtain 
\begin{align*}
\nlp4{  u(t)-\al \Theta(t)}
&\leq \nlp4{S(t)u_0-\al\Theta(t)}\\
&\qquad +C\int_0^t(t-s)^{-\frac34}\nlp2{[(u-\al\Theta)\otimes u +\al\Theta\otimes (u-\al\Theta)](s)}\,ds\\
&\leq \nlp4{S(t)u_0-\Theta(t)}\\
&\qquad +C\int_0^t(t-s)^{-\frac34}\nlp4{(u-\al\Theta)(s)}(\nlp4{u(s)}+\nlp4{\al\Theta(s)})\,ds\\
&\leq \nlp4{S(t)u_0-\al\Theta(t)}+C\ep\int_0^t(t-s)^{-\frac34}s^{-\frac14}\nlp4{(u-\al\Theta)(s)}\,ds.
\end{align*}
Hence, denoting 
$
\zeta(t)=t^{\frac14} \nlp4{  u(t)-\al\Theta(t)} 
$
 \ we infer that
\begin{align*}
  \zeta(t)
&\leq t^{\frac14}\nlp4{S(t)u_0-\al\Theta(t)}+C\ep t^{\frac14}\int_0^t(t-s)^{-\frac34}s^{-\frac12}\zeta(s)\,ds\\
&\leq t^{\frac14}\nlp4{S(t)u_0-\al \Theta(t)}+C\ep\int_0^1(1-\tau)^{-\frac34}\tau^{-\frac12}\zeta(t\tau)\,d\tau.
\end{align*}
Now, we compute  $\limsup\limits_{t\to\infty}$ of both sides of this inequality and we 
use \eqref{limsupx} for $p=4$. By the Lebesgue dominated convergence theorem,  we obtain
\begin{equation*}
 \limsup_{t\to\infty}\zeta(t)
\leq  C\ep\limsup_{t\to\infty}\zeta(t)\int_0^1(1-\tau)^{-\frac34}\tau^{-\frac12}\,d\tau=C_1\ep\limsup_{t\to\infty}\zeta(t),
\end{equation*}
where $C_1$ is a constant independent of $\ep$.
If  $C_1\ep<1$,  this inequality  implies immediately that  $ \limsup\limits_{t\to\infty}\zeta(t)=0$,
which is the relation   \eqref{limsup} for $p=4$. 

The same argument as above works for $p\neq4$ but the constant $C_1$ will depend on $p$ so the smallness condition $C_1\ep<1$ cannot hold true unless $\ep=0$. To get around this difficulty, we show that if  \eqref{limsup} holds true for $p=4$ then it holds true for all $p\in (2,\infty)$. Using similar computations as above we obtain
\begin{align*}
\nlp p{u(t)-\al \Theta(t)}
&\leq \nlp p{S(t)u_0-\al \Theta(t)}\\
&\qquad +C(p)\ep\int_0^t(t-s)^{-1+\frac1p}s^{-\frac14}\nlp4{u(s)-\al \Theta(s)}\,ds\\
&\leq \nlp p{S(t)u_0-\al \Theta(t)}+C(p)\ep t^{\frac1p-\frac12}\int_0^1(1-\tau)^{-1+\frac1p}\tau^{-\frac12}\zeta(\tau t)\,d\tau.
\end{align*}
Multiplying both sides of this inequality by $t^{\frac12-\frac1p}$, computing   $\limsup\limits_{t\to\infty}$, and using the 
already-proved decay for $p=4$ completes the proof of Proposition \ref{reduction-linear}.
\end{proof}

\subsection{Proof of Theorem \ref{mainthm}.}

We fix $\ep>0$ required  in Proposition \ref{reduction-linear}  and choose $\al_0\in(-\ep,\ep)$ and $T_0$ as in Corollary \ref{corol} to have that $\|u(T_0)\|_{L^{2,\infty}(\Omega)}\leq\ep$.
Let us observe  that $u(T_0)$ verifies 
\begin{equation}\label{ut0}
u(T_0)-\al\H\in \L2s. 
\end{equation}
Indeed, it follows  from Lemma \ref{l4} that $u(T_0)-\tu(T_0)-\al S(T_0)\H\in \L2s$. 
Clearly $\tu(T_0)\in \L2s$, because  $\tu$ is a square integrable weak solution of the Navier-Stokes equations.
 Moreover, we have $S(t)\H-\H\in \L2s$ as was shown in   \cite{MR2244381}. Thus, the proof of \eqref{ut0} is complete. 

In particular, using Corollary \ref{cor:dec} we have 
\begin{equation*}
\lim_{t\to\infty}t^{\frac12-\frac1p}\nlp p{S(t)(u(T_0)-\al\H)}=0\qquad\text{for every}\quad  p\in(2,\infty).
\end{equation*}
Thus, we infer from Theorem \ref{thm:oseen} that
$$
\lim_{t\to\infty}t^{\frac12-\frac1p}\nlp p{S(t)u(T_0)-\al\Theta(t)}=0\qquad\text{for every}\quad  p\in(2,\infty).
$$
Apply now Proposition \ref{reduction-linear} starting from time $T_0$ to obtain 
\begin{equation*}
\lim_{t\to\infty}(t-T_0)^{\frac12-\frac1p}  \nlp{p}{u(t)-\al \Theta(t-T_0)}=0 \qquad \text{for every}\quad p\in (2,\infty).
\end{equation*}

A calculation using the explicit formula for $\Theta$ given in \eqref{Oseen} shows that
\begin{equation*}
(t-T_0)^{\frac12-\frac1p}  \nlp{p}{\Theta(t)- \Theta(t-T_0)}  
=\nlp{p}{\Theta(1)-\Theta\bigl(\frac t{t-T_0}\bigr)}\to0\quad\text{as }t\to\infty
\end{equation*}
by the dominated convergence theorem (observe that $\bigl|\Theta\bigl(\frac t{t-T_0}\bigr)\bigr|\leq |\Theta(1)\bigr|$). This completes the proof of Theorem \ref{mainthm}.

\bigskip

{\bf Acknowledgments.}
The work of G.~Karch was partially supported 
by the MNiSzW grant No.~N~N201 418839 and 
the Foundation for Polish Science operated within the
Innovative Economy Operational Programme 2007-2013 funded by European
Regional Development Fund (Ph.D. Programme: Mathematical
Methods in Natural Sciences). The third author is partially supported by the Agence Nationale de la Recherche, Project MathOc\'ean, grant ANR-08-BLAN-0301-01.

\adrese

\end{document}